\newcommand\norm[1]{\left\|#1\right\|}
\newcommand\abs[1]{\lvert#1\rvert}
\newcommand{\tforall}{\text{ for all }}
\theoremstyle{definition}
\newtheorem{theorem}{Theorem}[section]
\newtheorem{lemma}[theorem]{Lemma}
\newtheorem{definition}[theorem]{Definition}
\theoremstyle{remark}
\newtheorem*{remark}{Remark}
\newcommand\mbp{\mathbb{P}}
\newcommand\spt{\text{spt}}
\newcommand\tand{\quad\text{and}\quad}
\newcommand{\Omeps}{\Omega^\epsilon}
\newcommand{\mbf}[1]{\mathbf{#1}}
\DeclareMathOperator{\argmin}{argmin}
\title{Convergence of the CEM-GMsFEM for Stokes flows in heterogeneous perforated domains}
\author{Eric Chung\thanks{Department of Mathematics, The Chinese University of Hong Kong, Shatin, Hong Kong.  (\texttt{E-mail: tschung@math.cuhk.edu.hk})}, 
~ Jiuhua Hu\thanks{Department of Mathematics, Texas A\&M University, College Station, TX 77843, USA.  (\texttt{E-mail: jhu@math.tamu.edu})}, 
~ and ~ Sai-Mang Pun\thanks{Department of Mathematics, Texas A\&M University, College Station, TX 77843, USA.  (\texttt{E-mail: smpun@math.tamu.edu})}}
\date{\today} 
\begin{document}
\maketitle
\begin{abstract}
In this paper, we consider the incompressible Stokes flow problem in a perforated domain and employ the constraint energy minimizing generalized multiscale finite element method (CEM-GMsFEM) to solve this problem. The proposed method provides a flexible and systematical approach to construct crucial divergence-free multiscale basis functions for approximating the displacement field. These basis functions are constructed by solving a class of local energy minimization problems over the eigenspaces that contain local information on the heterogeneities. These multiscale basis functions are shown to have the property of exponential decay outside the corresponding local oversampling regions. 
By adapting the technique of oversampling, the spectral convergence of the method with error bounds related to the coarse mesh size is proved. 

\end{abstract}

\section{Introduction}
In physics and structural mechanics there is a wide range of applications involving perforated domains (see Figure \ref{fig:perforation} for an example of perforated domain). The perforated domain is characterized by partitioning a material into a solid portion and a pore space, referred as ``matrix" and ``pores", respectively. In the model of differential equations over porous media, the state equation is built in the matrix and the boundary conditions are imposed on the boundary of the matrix, including the boundary of the pores. A direct numerical treatment of solving differential equations on such a domain  is challenging because a fine mesh discretization is needed near the pores and this will result in a large computation. 

Many model reduction techniques for problems with perforation have been well developed in the existing literature to improve the computational efficiency. For example, in numerical upscaling methods \cite{Stochastic2004,YalchinHouMultiscale, Homo2018Stokes, hornung1996homogenization, Yosifian1997Homo, Micromechanics}, 
one typically derives upscaled media or upscaled models and solves the resulting upscaled problem globally on a coarse grid. The dimensions of the corresponding linear systems are much smaller, giving a guaranteed saving of computational cost. 
In addition, various multiscale methods for simulating multiscale problems with perforations are presented in the literature. For instance, multiscale finite element methods (MsFEM) of Crouzeix-Raviart type have been developed for elliptic problem \cite{le2014msfem} and Stokes flows \cite{feng2018crouzeix,msfemstokesp2,msfemstokesp1}. In \cite{henning2009heterogeneous}, the Heterogeneous multiscale method (HMM) is proposed to discretize the elliptic problem with perforations in a coarse grid. Recently, a class of generalized finite element methods for the elliptic problem in perforated domain \cite{brown2016multiscale} has been proposed. This type of methods is based on the idea of localized orthogonal decomposition (LOD) \cite{engwer2019efficient,Peterseim2014} and generalize the traditional finite element method to accurately resolve the multiscale problems with a cheaper cost. 

In this research, we focus on the recently-developed generalized multiscale finite element method (GMsFEM) \cite{chung2016adaptive,efendiev2013generalized}. The GMsFEM is a generalization of the classical MsFEM \cite{efendiev2009multiscale} in the sense that multiple basis functions can be systematically constructed for each coarse block. The GMsFEM consists of two stages: the offline and online stages. In the offline stage, a set of (local supported) snapshot functions are constructed, which can be used to essentially capture all fine-scale features of the solution. Then, a model reduction is performed by the use of a well-designed local spectral decomposition, and the dominant modes are chosen to be the multiscale basis functions. All these computations are done before the actual simulations of the model. In the online stage, with a given source term and boundary conditions, the multiscale basis functions obtained in the offline stage are used to approximate the solution. There are some previous works using GMsFEM for the Darcy's flow model in perforated domain \cite{chung2017onlinep,chung2016mixed}, the Stokes equation with perforation \cite{Chung2017Stokes}
as well as coupled flow and transport in perforated domains \cite{chung2018multiscaleflow}. 

In this paper, we will develop and analyze a novel multiscale method for incompressible Stokes flows in perforated domains. Our idea is motivated by the recently-developed Constraint Energy Minimizing Generalized Multiscale Finite Element Method (CEM-GMsFEM), which has achieved great success in solving elliptic problems with multiscale features \cite{CEM_elliptic,chung2018cemmixed}. This method has been applied successfully in dealing with many problems, e.g., embedded fracture model for coupled flow and mechanics problem \cite{CEM2019Flow}, poroelasticity problems \cite{fu2019computational,CEM2020Elasticity}, and wave equation \cite{chung2020computational}. CEM-GMsFEM is based on the framework of GMsFEM to design multiscale basis functions such that the convergence of the method is independent of the contrast from the heterogeneities; and the error linearly decreases with respect to coarse mesh size if oversampling parameter is appropriately chosen. 
Our approach of solving velocity has two ingredients. Firstly, we construct auxiliary multiscale basis functions by solving a local eigenvalue problem on each coarse block. The global auxiliary space is formed by extending these auxiliary basis and the auxiliary space contains the information related to the pores. Secondly, the multiscale basis is sought in a weakly divergence free space by solving a minimization problem in an oversampling domain. The impose of weakly divergence free condition on the multiscale basis enables us solving velocity solitarily. We prove in Lemma \ref{lem:exponential-decay} that the multiscale basis decay exponentially outside the local oversampling domain. This exponential decay property plays a vital role in the convergence analysis of the proposed method and justifies the use of local multiscale basis functions.

We organize the paper as follows. In Section \ref{sec: problem_setting}, we state the model problem and its variational formulation. In Section \ref{sec:construction}, we introduce auxiliary space and the construction of multiscale basis functions for pressure using relaxed constraint energy minimization. The multiscale basis functions are constructed by solving a local spectral problem. We analyze convergence results in Section \ref{sec:convergence}. Concluding remarks will be drawn in Section \ref{sec:conclusion}. 

\begin{figure}[ht]
	\centering
	\includegraphics[width=2.5in]{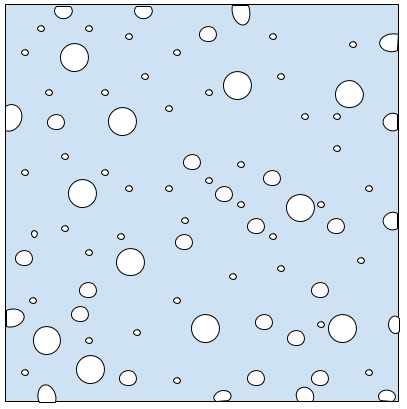}
	\caption{Illustration of a perforated domain}
	\label{fig:perforation}
\end{figure}

\section{Problem Setting}\label{sec: problem_setting}
In this section, we start with stating the Stokes flow in heterogenous perforated domains.  Then some notations and function spaces are introduced. We also introduce its corresponding variational formulation.

\subsection{Model problem}
Let $\Omega \subseteq \mathbb{R}^d$ be a bounded domain and $B_{\epsilon}$ be a set of perforations within this domain. The perforations are supposedly small and of a large number. We denote by $\Omeps := \Omega \setminus \overline{B_\epsilon}$ the perforated domain. 
Then, we consider the basic linear model for incompressible fluid mechanics, i.e, Stokes equations. Stokes problem consists of  finding  vector function $\mbf{u}\colon \Omeps \to \mathbb{R}^d$ and scalar function $p\colon \Omeps \to \mathbb{R}$ satisfying
\begin{eqnarray}
\begin{split}
- \mu\Delta \bold{u} + \nabla p & =\bold{f} \quad &\text{in } \Omega^{\epsilon}, \\
\nabla \cdot \bold{u} & = 0 \quad &\text{in } \Omega^{\epsilon}, \\
\mbf{u} & = \mbf{g} \quad &\text{on } \partial \Omeps, 
\end{split}
\label{eqn:model}
\end{eqnarray}
where the vector filed $\mbf{f} \colon \Omeps \to \mathbb{R}^d$ is the body force acting on the fluid, $\mbf{u}$ can be interpreted as the velocity of an incompressible fluid motion, $p$ is the associated pressure, and the constant $\mu$ is the viscosity coefficient fluid. 
For the sake of simplicity, we only consider homogeneous Dirichlet boundary for the velocity, i.e., $\mbf{g}=0$, and the viscosity constant $\mu=1$. The extension to the general viscosity constant and other types of boundary conditions  is straightforward. 
Since the pressure $p$ is uniquely defined up to a constant, we  assume that $\int_{\Omeps} p~ dx  = 0$ so that the problem has a unique solution. In this model, the primary source of the heterogeneity comes from the perforations in the computational domain; model reduction is necessary for practical simulation in this case. 

\subsection{Function spaces}
In this subsection, we clarify the notations used throughout the article. 
We write $(\cdot,\cdot)$ to denote the inner product in $L^2(\Omeps)$ and $\norm{\cdot}$ for the corresponding norm. 
We denote $L_0^2(\Omega^\epsilon)$ the subspace of $L^2(\Omeps)$ containing functions with zero mean. 
Let $H^1(\Omeps)$ be the classical Sobolev space with the norm $\norm{v}_1 := \left ( \norm{v}^2 + \norm{\nabla v}^2 \right )^{1/2}$ for any $v \in H^1(\Omeps)$ 
and $H_0^1(\Omeps)$ the subspace of functions having a vanishing trace. 
For vector-valued functions, we denote $\mathbf{L}^2(\Omeps):= \left ( L^2(\Omeps) \right )^d$ and $\mathbf{H}_0^1(\Omega^{\epsilon}) := \left ( H_0^1(\Omega^{\epsilon}) \right)^d$. We write $\left \langle \cdot, \cdot \right \rangle$ to denote the inner product in $\mathbf{L}^2(\Omeps)$. 
We also denote $\norm{\cdot}$ the norm induced by the inner product $\left \langle \cdot, \cdot \right \rangle$. 
To shorten notations, we define the spaces for the velocity field $\mathbf{u}$ and the pressure $p$ by
$$
\mathbf{V}_0:=\mathbf{H}_0^1(\Omeps) \quad \text{and} \quad Q_0:=L^2_0(\Omeps).
$$

\subsection{Variational formulation and fine-grid discretization}
In this subsection, we provide the variational formulation corresponding to the system \eqref{eqn:model}. 
We multiply the first equation and the second one with test functions from $\mathbf{V}_0$ and $Q_0$, respectively. Then, applying Green's formula and making use of the boundary condition, the associated variational formulation of Stokes equation reads: Find $(\bold{u}, p) \in \mathbf{V}_0 \times Q_0$ such that 
\begin{eqnarray}\label{solution:u}
\begin{split}
a(\bold{u},\bold{v}) - b(\bold{v},p) & =\langle \bold{f},\bold{v} \rangle \quad &\text{for all } \bold{v} \in \mathbf{V}_0, \\
b(\bold{u},q) & = 0 \quad &\text{for all } q \in Q_0,
\end{split}
\end{eqnarray}

where
$$ a(\bold{u},\bold{v}) := \int_{\Omega^{\epsilon}} \nabla \bold{u} : \nabla \bold{v} ~ dx, \quad \text{and} \quad  b(\bold{u},q) := \int_{\Omega^{\epsilon}} q \nabla \cdot \bold{u} ~ dx.$$

The well-posedness of \eqref{solution:u} can be proved (see, for example \cite[Chapter 4]{Guermond2004book}). 
Throughout this work, we denote $\norm{\cdot}_a := \sqrt{a(\cdot,\cdot)}$ the energy norm. In particular, for $\mbf{v}=(v_1,...,v_d)^T\in \mbf{V}_0$, $\norm{\mbf{v}}^2_a=\sum_{i=1}^d \norm{v_i}_a^2$.

To discretize the variational problem \eqref{solution:u}, let $\mathcal{T}^h$ be a conforming partition for the computational domain 
$\Omega^\epsilon$ with (local) grid sizes $h_{K}:=\text{diam}(K)$ for $K\in \mathcal{T}^h$ and $h:=\text{max}_{K\in \mathcal{T}^h}h_K$. We remark that $\mathcal{T}^h$ is referred to as the \textit{fine grid}. Next, let $\mathbf{V}_h$ and $Q_h$ be any conforming stable pair of finite element spaces with respect to the fine grid $\mathcal{T}^h$. 
For the coupling numerical scheme, one may use continuous Galerkin (CG) formulation: 
Find $(\mathbf{u}_h, p_h) \in \mathbf{V}_h \times Q_h$ such that 
\begin{eqnarray} \label{eqn:weak_dis}
\begin{split}
a(\mathbf{u}_h,\mathbf{v}_h) - b(\mathbf{v}_h,p_h) & =\left \langle \mathbf{f},\mathbf{v}_h \right \rangle \quad& \tforall \mathbf{v}_h \in \mathbf{V}_h, \\
b(\mathbf{u}_h,q_h) & = 0 \quad & \tforall  q_h \in Q_h. 
\end{split}
\end{eqnarray}
We remark that this classical approach will serve as a reference solution. The aim of this research is to construct a reduced system based on \eqref{eqn:weak_dis}. To this end, we introduce finite-dimensional multiscale spaces $\mathbf{V}_{\text{ms}} \subseteq \mathbf{V}_0$ and $Q_{\text{ms}} \subseteq Q_0$, whose dimensions are much smaller, for approximating the solution on some feasible coarse grid.

\section{Construction of multiscale spaces} \label{sec:construction}
In this section, we construct multiscale spaces on a coarse grid. 
Let $\mathcal{T}^H$ be a conforming partition of the computational domain $\Omega^\epsilon$ such that $\mathcal{T}^h$ is a refinement of $\mathcal{T}^H$.
We call $\mathcal{T}^H$ the \textit{coarse grid} and each element of $\mathcal{T}^H$  a coarse block. We denote  $H:=\text{max}_{K\in \mathcal{T}^H}\text{diam}(K)$ the coarse grid size. 
Let $N_c$ be the total number of (interior) vertices of $\mathcal{T}^H$ and $N$ be the total number of coarse elements. We remark that the coarse element $K \in \mathcal{T}^H$ is a closed subset (of the domain $\Omeps$) with nonempty interior and piecewise smooth boundary. 
Let $\{x_i\}_{i=1}^{N_c}$ be the set of nodes in $\mathcal{T}^H$. Figure \ref{fig:grid} illustrates the fine grid and a coarse element $K_i$. 

\begin{figure}[ht]
	\centering
	\includegraphics[width=3in]{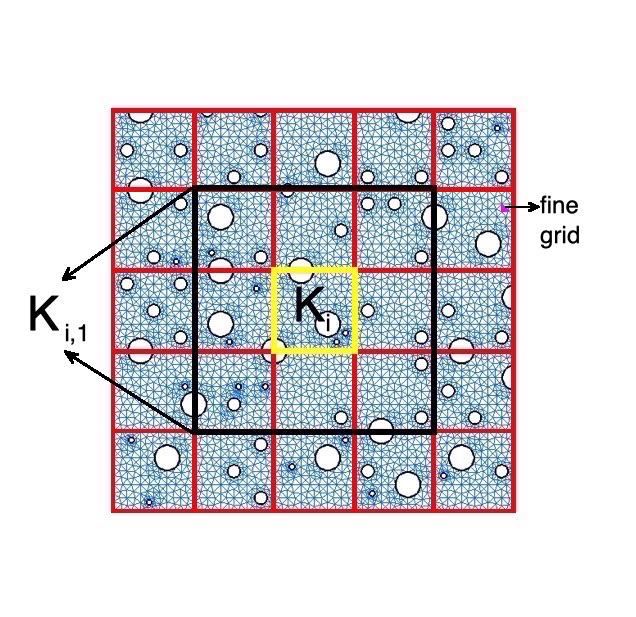}
	\caption{Illustration of the coarse grid, the fine grid, and the oversampling domain.}
	\label{fig:grid}
\end{figure}

The construction of the multiscale spaces consists of two steps. The first step is to construct auxiliary multiscale spaces using the concept of GMsFEM. Based on the auxiliary spaces, we can then construct multiscale spaces containing basis functions whose energy are minimized in some subregions of the domain. These energy-minimized basis functions will be shown to decay exponentially outside the oversampling domain, and can be used to construct a multiscale solution.

\subsection{Auxiliary space}
In this section, we begin with the construction of the auxiliary multiscale basis functions.
Let $\mathbf{V}(S)$ be the restriction of $\mathbf{V}_0$ on $S \subset \Omega^\epsilon$ and $\mathbf{V}_0(S)$ be the subspace of $\mathbf{V}(S)$, whose element is of zero trace on $\partial S$. We also define $Q_0(S) := L_0^2(S)$. 
Consider the following local spectral problem: 
Find $(\phi_j^i, \lambda_j^i)\in \mathbf{V}(K_i) \times \mathbb{R}$ such that 
\begin{eqnarray}
a_i(\phi_j^i, \mbf{v}) =  \lambda_j^i s_i(\phi_j^i, \mbf{v}) \quad \tforall  \mbf{v} \in \mathbf{V}(K_i), 
\label{eqn:spectral}
\end{eqnarray}
where $a_i(\cdot, \cdot)$ and $s_i(\cdot, \cdot)$ are defined as follows:
\begin{eqnarray}
a_i(\mbf{u},\mbf{v}) := \int_{K_i} \nabla \mbf{u} : \nabla \mbf{v}~ dx \quad \text{and} \quad s_i (\mbf{u},\mbf{v}) := \int_{K_i} \tilde{\kappa} \mbf{u} \cdot \mbf{v} ~ dx
\label{eqn:local_form}
\end{eqnarray}
for any $\mbf{u}, \mbf{v} \in \mathbf{V}(K_i)$. Here, we define $\tilde{\kappa} :=\sum_{j=1}^{N_c} \abs{\nabla \chi_j^{\text{ms}}}^2$, where $\{ \chi_j^{\text{ms}} \}_{j=1}^{N_c}$ is a set of neighborhood-wise defined partition of unity functions \cite{bm97} on the coarse grid. In particular, the function $\chi_j^{\text{ms}}$ satisfies $H \abs{\nabla \chi_j^{\text{ms}}} = O(1)$ and $0 \leq \chi_j^{\text{ms}} \leq 1$. 

Assume that the eigenvalues are arranged in ascending order such that $$0 \leq \lambda_1^i \leq \cdots \leq \lambda_{\ell_i}^i \leq \cdots$$ for each $i \in \{ 1, \cdots, N\}$. Also, we assume that the eigenfunctions satisfy the normalization condition $s_i(\phi_j^i,\phi_j^i)=1$. 
Then, we choose the first $\ell_i \in \mathbb{N}^+$ eigenfunctions and define $V^i_{\text{aux}} :=\text{span}\{ \phi^i_j: j = 1, \cdots,  \ell_i\}$. Based on these local spaces, the global auxiliary space $V_{\text{aux}}$ is defined to be 
$$V_{\text{aux}}:=\displaystyle{\bigoplus_{i=1}^N V^i_{\text{aux}}} 
\quad \text{with inner product} \quad 
s(\mbf{u}, \mbf{v}) := \sum_{i=1}^{N} s_i (\mbf{u}, \mbf{v})$$
for any $\mbf{u}, \mbf{v} \in V_{\text{aux}}$. 
Further, we define an orthogonal projection $\pi: \mbf{V}_0 \to V_{\text{aux}}$ such that 
\begin{eqnarray*}
\pi(\mbf{v}):=\sum_{i=1}^N \pi_i(\mbf{v}), \quad \text{where } \pi_i(\mbf{v}) := \sum_{j=1}^{\ell_i} s_i(\mbf{v},\phi_j^i)\phi_j^i
\end{eqnarray*}
for all  $\mbf{v} \in \mathbf{V}_0$.

\subsection{Multiscale space}
In this section, we construct multiscale basis functions based on constraint energy minimization. 
For each coarse element $K_i$, we define the oversampled region $K_{i,k_i} \subseteq \Omeps$ by enlarging $K_i$ by $k_i \in \mathbb{N}$ layer(s), i.e., 
$$ K_{i,0} := K_i, \quad K_{i,k_i} := \bigcup \{ K \in \mathcal{T}^H : K \cap K_{i,k_i -1} \neq \emptyset \} \quad \text{for } k_i = 1, 2, \cdots.$$
We call $k_i$ a parameter of oversampling related to the coarse element $K_i$. See Figure \ref{fig:grid} for an illustration of $K_{i,1}$. For simplicity, we denote $K_i^+$ a generic oversampling region related to the coarse element $K_i$ with a specific oversampling parameter $k_i$. 
Next, we define multiscale basis function possessing the property of constraint energy minimization \cite{CEM_elliptic}. In particular, for each auxiliary function $\phi_j^i \in V_{\text{aux}}$, we solve the following minimization problem: Find $\psi_{j,\text{ms}}^i \in \mbf{V}_0(K_i^+)$ such that 
\begin{eqnarray}
\psi^i_{j,\text{ms}} := \argmin \left \{ a(\psi, \psi)+s\left (\pi (\psi)-\phi^i_j, \pi(\psi)-\phi^i_j \right ):  \psi \in \mathbf{V}_0(K_{i}^{+}) \text{ and } \nabla \cdot \bold{\psi}  = 0 \right \}.
\label{eqn:original_min_relax_ms}
\end{eqnarray}
Note that problem \eqref{eqn:original_min_relax_ms} is equivalent to the local problem: Find $(\psi_{j,\text{ms}}^i , \xi_{j,\text{ms}}^i) \in \mbf{V}_0(K_i^+) \times Q_0(K_i^+)$ such that 
\begin{eqnarray}
\begin{split}
a(\psi^i_{j,\text{ms}}, v)+s\left (\pi(\psi^i_{j,\text{ms}}),\pi(v)\right ) + b(v, \xi_{j,\text{ms}}^i) &=s\left(\phi_{j}^i, \pi(v) \right) &\quad \text{for all } v\in \mathbf{V}_0 (K_i^+),\\
b(\psi^i_{j,\text{ms}},q)&=0 & \quad \text{for all } q\in Q_0(K_i^+).
\end{split}
\label{eqn:min_relax_ms}
\end{eqnarray}

Finally, for fixed parameters $k_i$ and $\ell_i$, the multiscale space $V_{\text{ms}}$ is defined by 
$$ V_{\text{ms}} := \text{span} \left \{ \psi_{j,\text{ms}}^i: 1\leq j \leq \ell_i,\ 1\leq i \leq N \right \}.$$

The multiscale basis functions can be interpreted as approximations to global multiscale basis functions $\psi_j^i\in \mathbf{V}_0$ defined by 
$$
\psi_j^i := \argmin \left \{a(\psi,\psi)+s\left (\pi(\psi)-\phi_j^i,\pi(\psi)-\phi_j^i \right ): \,\psi\in \mbf{V}_0 \text{ and } \nabla \cdot \psi = 0 \right \},
$$
which is equivalent to the following variational formulation: Find $(\psi^i_j, \xi_j^i ) \in \mathbf{V}_0 \times  Q_0$ such that 
\begin{eqnarray}
\begin{split}
a(\psi^i_j, v)+s\left (\pi(\psi^i_j),\pi(v)\right ) + b(v, \xi_j^i) &=s\left(\phi_j^i, \pi(v) \right) \quad &\text{for all } v\in \mathbf{V}_0,\\
b(\psi^i_j,q)&=0 & \quad \text{for all } q\in Q_0.
\end{split}
\label{eqn:min_relax} 	
\end{eqnarray}

These basis functions have global support in the domain $\Omeps$, but, as shown in Lemma \ref{lem:exponential-decay}, 
decay exponentially outside some local (oversampled) region. This property plays a vital role in the convergence analysis of the proposed method and justifies the use of local basis functions in $V_{\text{ms}}$. 
Furthermore, we define $V_{\text{glo}} := \text{span} \left \{ \psi_j^i: 1 \leq j \leq \ell_i,  1 \leq i \leq N \right \}$ and $\tilde{V}:=\{\mbf{v}\in \mathbf{V}_0^{\text{div}}: \pi(\mbf{v})=0\}$, where $\mbf{V}_0^{\text{div}}$ is the closed subspace of $\mbf{V}_0$ containing divergence-free vector fields. 
Then, one can show that $\mbf{V}_0^{\text{div}}=V_{\text{glo}} \bigoplus_a  \tilde{V}$. 

\begin{remark}
Suppose that $S \subset \Omega^\epsilon$ is any non-empty connected union of coarse elements $K_i \in \mathcal{T}^H$. Denote $\mathcal{D}_S: \mathbf{H}_0^1(S) \to L_0^2(S)$ the divergence operator corresponding to the set $S$. We have the following auxiliary result from functional analysis. 
\end{remark}

\begin{lemma}[cf. Theorem 6.14-1 in \cite{ciarlet2013linear}] \label{lem:in-sur}
Suppose that $S$ is any non-empty connected union of coarse elements. Restricting the domain of $\mathcal{D}_S$ on the orthogonal complement (with respect to standard $L^2$ inner product) of its kernel, the divergence operator $\mathcal{D}_S$ is injective and surjective. Moreover, it has a continuous inverse and there is a generic constant $\beta_S >0$ such that 
$$ \beta_S \norm{\mathcal{D}_S^{-1} \mu}_a \leq  \norm{\mu} \quad \text{for any } \mu \in L^2_0(S).$$
\end{lemma}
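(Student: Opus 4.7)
The plan is to invoke the classical Bogovskii/Nečas surjectivity theorem for the divergence on a bounded Lipschitz domain and then apply the bounded inverse theorem; the statement is really the inf-sup condition for Stokes on $S$ dressed up in operator-theoretic language. First, I would observe that $\mathcal{D}_S = \nabla \cdot$ is a bounded linear map from $\mathbf{H}_0^1(S)$ into $L^2_0(S)$: boundedness is immediate from Cauchy--Schwarz, $\norm{\nabla \cdot \mathbf{v}} \leq \sqrt{d}\,\norm{\mathbf{v}}_a$, while the image lies in the zero-mean subspace by integration by parts with vanishing trace. Since $S$ is a connected union of coarse elements inside $\Omega^\epsilon$, its boundary---a finite union of pieces of coarse-element faces and pieces of pore boundaries---is Lipschitz under the standing regularity assumption on the perforations.

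Next, I would appeal to the classical surjectivity theorem (see, e.g., Bogovskii's construction or Nečas's approach): on any bounded Lipschitz domain there is some $\beta_S > 0$ such that for every $\mu \in L^2_0(S)$ one can find $\mathbf{v} \in \mathbf{H}_0^1(S)$ with $\nabla \cdot \mathbf{v} = \mu$ and $\norm{\mathbf{v}}_a \leq \beta_S^{-1} \norm{\mu}$. This is equivalent to the inf-sup condition
\[
\inf_{\mu \in L^2_0(S)\setminus\{0\}} \ \sup_{\mathbf{v}\in \mathbf{H}_0^1(S)\setminus\{0\}} \frac{\int_S \mu \,\nabla \cdot \mathbf{v}\,dx}{\norm{\mathbf{v}}_a\,\norm{\mu}} \geq \beta_S,
\]
from which surjectivity of $\mathcal{D}_S$ onto $L^2_0(S)$ is immediate.

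When $\mathcal{D}_S$ is restricted to the orthogonal complement $(\ker \mathcal{D}_S)^\perp$ of its kernel, it is injective by construction and, by the previous step, still surjective onto $L^2_0(S)$; hence it is a bijection between Hilbert spaces. The bounded inverse theorem then produces a continuous inverse, and the explicit norm estimate $\beta_S \norm{\mathcal{D}_S^{-1}\mu}_a \leq \norm{\mu}$ falls out directly from the inf-sup inequality by testing the supremum with the representative $\mathbf{v} = \mathcal{D}_S^{-1}\mu$ in $(\ker \mathcal{D}_S)^\perp$.

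The main obstacle I would expect is verifying the Lipschitz character of $\partial S$ in the perforated geometry, since coarse-element faces can meet pore boundaries at awkward angles; this is handled by the standing assumption that the pores have Lipschitz boundaries and are placed so that $\partial S$ does not develop cusps or self-intersections. The quantitative dependence of $\beta_S$ on the shape of $S$ is inherited from Bogovskii's explicit construction but is not tracked in the sequel---only positivity of $\beta_S$ is actually used downstream.
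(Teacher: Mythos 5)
Your argument is correct and takes essentially the same route as the paper, which offers no proof of its own but simply cites Theorem 6.14-1 of Ciarlet --- precisely the Bogovskii/Ne\v{c}as surjectivity of the divergence on a bounded connected Lipschitz domain combined with the bounded inverse theorem that you reconstruct. The one loose phrase is the final step: the estimate $\beta_S \norm{\mathcal{D}_S^{-1}\mu}_a \leq \norm{\mu}$ does not ``fall out by testing the supremum with $\mathcal{D}_S^{-1}\mu$'' (the Bogovskii preimage of controlled energy need not be the $L^2$-orthogonal representative), but it does follow from the bounded inverse theorem you already invoked, and since only the existence of some generic $\beta_S>0$ is asserted, this is harmless.
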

Using the result of Lemma \ref{lem:in-sur}, one can show that \eqref{eqn:min_relax_ms} and \eqref{eqn:min_relax} are well-posed. Let $S$ be the whole domain $\Omega^\epsilon$ or an oversampled region $K_i^+$. Then, for any non-zero element $v \in \mathbf{H}_0^1(S)$, we have 
\begin{eqnarray}
\sup_{v \in \mathbf{H}_0^1(S), v \neq 0} ~ \frac{\abs{b(v, \mu)}}{\norm{v}_a} \geq \frac{\abs{b(\mathcal{D}_S^{-1} \mu, \mu)}}{\norm{\mathcal{D}_S^{-1} \mu}_a} = \frac{\norm{\mu}^2}{\norm{\mathcal{D}_S^{-1} \mu}_a} \geq \beta_S \norm{\mu}
\label{eqn:inf-sup-s}
\end{eqnarray}
for any $\mu \in L_0^2(S)$, which shows that the inf-sup condition holds for \eqref{eqn:min_relax}. Similarly, we can prove the inf-sup condition holds for  \eqref{eqn:min_relax_ms}.

\subsection{The multiscale method}
From the above, we have the multiscale space $V_{\text{ms}}$ for the approximation of velocity field. 
The multiscale solution $\mathbf{u}_{\text{ms}} \in V_{\text{ms}}$ is obtained by solving the following equation: 
\begin{eqnarray}\label{bilinear_velocity}
a(\bold{u_{\text{ms}}}, \bold{v}) = \left \langle \mathbf{f}, \bold{v} \right \rangle \quad \text{for all } \bold{v} \in V_{\text{ms}}.
\end{eqnarray}
To approximate the pressure based on coarse grid, we will construct a specific solution space of finite dimension. 
Let $W(K_i):=\{v\in H^1(K_i):  \int_{K_i} v ~dx=0,~b(\mbf{w},v)=0 \tforall \mbf{w}\in (I-\pi) \mbf{V}_0\}$.
We consider the following spectral problem: Find $(q_j^i, \zeta_j^i)\in W(K_i) \times \mathbb{R}$ such that 
\begin{eqnarray}
\mathcal{A}_i(q_j^i, v) =  \zeta_j^i \mathcal{S}_i(q_j^i, v) \quad \tforall  v \in W(K_i), 
\label{eqn:spectral}
\end{eqnarray}
where $\mathcal{A}_i(\cdot, \cdot)$ and $\mathcal{S}_i(\cdot, \cdot)$ are defined as follows:
\begin{eqnarray}
\mathcal{A}_i(u,v) := \int_{K_i} \nabla u \cdot \nabla v~ dx \quad \text{and} \quad \mathcal{S}_i (u,v) := \int_{K_i} \tilde{\kappa} uv ~ dx
\label{eqn:local_form}
\end{eqnarray}
for any $u, v\in H^1(K_i)$. 
Assume that for each $i \in \{ 1, \cdots, N \}$ the eigenvalues $\zeta^i_j$ are arranged in ascending order such that $0\leq \zeta_1^i \leq \zeta_2^i\leq \cdots$. We then define a finite dimensional solution space $Q_H$ as follows:
$$Q_H:=\text{span}\left \{ q_j^i :~ i=1,\cdots,N, ~ j=1,\cdots,  \ell_i \right \}.$$
 Then, we solve the following variational problem over the domain $\Omega^{\epsilon}$: Find $p_{\text{ms}}\in Q_H$ such that 
\begin{eqnarray}\label{pressure:system}
b(\mbf{v}, p_{\text{ms}} ) =  a(\mbf{u}_{\text{ms}}, \mbf{v})-\left \langle \mbf{f}, \mbf{v} \right \rangle  \quad \tforall \mbf{v} \in V_{\text{aux}}.
\end{eqnarray}
Note that $\text{dim}(Q_H)=\text{dim}(V_{\text{aux}})$. To prove the well-posedness of \eqref{pressure:system}, it  suffices to verify inf-sup condition for the bilinear form $b(\cdot,\cdot)$ over $V_{\text{aux}}$ and $Q_H$.
Recall that the variational formulation \eqref{solution:u} is well-posed and inf-sup condition holds for $b(\cdot,\cdot)$ under spaces $\mathbf{V}_0$ and $Q_0$.  Hence, for any $q\in Q_H$, there exists $\mbf{w}\in \mathbf{V}_0$ such that $b(\mbf{w},q)\geq C\norm{\mbf{w}}_a\norm{q}$ for some constant $C>0$. Choosing $ \mbf{v}:=\pi \mbf{w}$, we have $\mbf{v}\in V_{\text{aux}}$ and 
$$b(\mbf{v},q)=b(\pi\mbf{w},q)=b(\mbf{w},q)\geq C\norm{\mbf{w}}_a\norm{q}\geq C\norm{\mbf{v}}_a\norm{q}.$$
 

Therefore, the problem \eqref{pressure:system} is well-posed. Note that the pressure $p$ solves the following equation:
$$ b(\mbf{v}, p) = a(\mathbf{u}, \mbf{v}) -  \left \langle \mathbf{f}, \mathbf{v} \right \rangle \quad \tforall \mbf{v} \in \mbf{V}_0.$$

Then, we have 
$$ b(\mbf{v}, p - p_{\text{ms}}) = a(\mbf{u} - \mathbf{u}_{\text{ms}}, \mbf{v})  \leq \norm{\mbf{u} - \mbf{u}_{\text{ms}}}_a \norm{\mbf{v}}_a,$$
for all $ \mbf{v} \in V_{\text{aux}}$. 
It implies that 
$$\sup_{\mbf{v} \in V_{\text{aux}}} \frac{b(\mbf{v},p - p_{\text{ms}})}{\norm{\mbf{v}}_a} \leq \norm{\mbf{u} - \mbf{u}_{\text{ms}}}_a.$$
The multiscale solution $p_{\text{ms}}$ serves as an approximation of  the solution $p$ and $\norm{p-p_{\text{ms}}}\lesssim \norm{\mbf{u} - \mbf{u}_{\text{ms}}}_a$. 

\section{Convergence analysis}\label{sec:convergence}
In this section, we analyze the proposed method. We denote $\norm{\cdot}_s := \sqrt{s(\cdot,\cdot)}$ the $s$-norm. In particular, $\norm{\mbf{v}}_s^2=\sum_{i=1}^d \norm{v_i}_s^2$ for any $\mbf{v}=(v_1,\cdots,v_d)^T.$
We also denote $\spt(v)$ the support of a given function or vector field. 
We write $a \lesssim b$ if there exists a generic constant $C>0$ such that $ a \leq Cb$. Define $\Lambda := \displaystyle{\min_{1\leq i\leq N} \lambda_{\ell_i +1}^i}$ and $\Gamma:=\displaystyle{\max_{1\leq i\leq N} \lambda^i_{\ell_i}}$. For a given subregion $S \subset \Omeps$, we define local norms $\norm{\mbf{v}}_{a(S)} := \left ( \int_{S} \abs{\nabla \mbf{v}}^2 ~ dx \right )^{1/2}$ and $\norm{\mbf{v}}_{s(S)} := \left ( \int_{S} \tilde \kappa \abs{\mbf{v}}^2 ~ dx \right )^{1/2}$ for any $\mbf{v} \in \mbf{V}_0$. 

Before estimating the error between global and local multiscale basis functions, we introduce some notions that will be used in the analysis. First, we introduce cutoff function with respect to oversampling region. Given a coarse block $K_i \in \mathcal{T}^H$ and a parameter of oversampling $m \in \mathbb{N}$, we recall that $K_{i,m} \subset \Omega^\epsilon$ is an $m$-layer oversampling region corresponding to $K_i$. 

\begin{definition}
For two positive integers $M$ and $m$ with $M>m\geq 1$, we define cutoff function $\chi_i^{M,m} \in \text{span}\{ \chi_j^{\text{ms}} \}_{j=1}^{N_c}$ such that $0 \leq \chi_i^{M,m} \leq 1$ and 
$$ \chi_i^{M,m} = \left \{ \begin{array}{cl}
1 & \text{in } K_{i,m}, \\
0 & \text{in } \Omega^\epsilon \setminus K_{i,M}. 
\end{array} \right .$$
Note that, we have $K_{i,m} \subset K_{i,M} \subset \Omega^\epsilon$ and $\text{spt}( \chi_i^{M,m})\subset K_{i,M}$. 
\end{definition}

First, we establish the following auxiliary results for later use in the analysis. 
\begin{lemma} \label{lemma:aux-ineq}
Let $\bold{v}\in \mbf{V}_0$ and $k \geq 2$ be an integer. Then, the following inequalities hold:
\begin{enumerate}[label=(\roman*)]
\item $ \norm{\mbf{v}}_a\leq \Gamma^{1/2} \norm{\mbf{v}}_s$ if $\bold{v}\in V_{\text{aux}}$;
\item $\norm{\mbf{v}}_s\leq \Lambda^{-1/2}\norm{\mbf{v}}_a$ if $\bold{v}\notin V_{\text{aux}}$;
\item $\norm{\mbf{v}}_s^2\lesssim  \Lambda^{-1}\norm{ (I-\pi)\mbf{v}}_a^2+\norm{\pi\mbf{v}}_s^2$;
\item $\norm{(1-\chi^{k,k-1}_i)\mbf{v} }_a^2\leq 2(1+\Lambda^{-1}) \norm{\mbf{v}}^2_{a(\Omega^\epsilon\setminus K_{i,k-1} )}+2\norm{\pi\mbf{v}}^2_{s(\Omega^\epsilon\setminus K_{i,k-1} )}$
\item $\norm{(1-\chi^{k,k-1}_i)\mbf{v} }_s^2\leq \Lambda^{-1} \norm{\mbf{v}}^2_{a(\Omega^\epsilon\setminus K_{i,k-1}) }+\norm{\pi\mbf{v}}^2_{s(\Omega^\epsilon\setminus K_{i,k-1}) }.$
\end{enumerate}
\end{lemma}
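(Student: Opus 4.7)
The argument hinges on two elementary facts, both inherited from the spectral problem~\eqref{eqn:spectral}: on each coarse block $K_i$, the eigenfunctions $\{\phi_j^i\}_{j\ge 1}$ form a basis that is $s_i$-orthonormal and $a_i$-orthogonal with $a_i(\phi_j^i,\phi_j^i)=\lambda_j^i$; and, because $\chi_i^{k,k-1}\in\text{span}\{\chi_j^{\text{ms}}\}_{j=1}^{N_c}$ with $H|\nabla\chi_j^{\text{ms}}|=O(1)$, one has $|\nabla\chi_i^{k,k-1}|^2\le\tilde\kappa$ pointwise (up to an absorbed generic constant controlled by the overlap of the partition of unity).

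For (i)--(iii) I would expand $\mbf{v}|_{K_i}=\sum_j c_j^i\phi_j^i$ with $c_j^i=s_i(\mbf{v},\phi_j^i)$, so that $\norm{\mbf{v}}_{s(K_i)}^2=\sum_j (c_j^i)^2$ and $\norm{\mbf{v}}_{a(K_i)}^2=\sum_j \lambda_j^i(c_j^i)^2$ by the double orthogonality. Statement (i) then follows because $\mbf{v}\in V_{\text{aux}}$ truncates the sum to $j\le\ell_i$, where $\lambda_j^i\le\Gamma$; statement (ii), read as $\pi(\mbf{v})=0$ in the standard projection-complement sense, truncates the sum to $j>\ell_i$, where $\lambda_j^i\ge\lambda_{\ell_i+1}^i\ge\Lambda$. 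For (iii) I split $\mbf{v}=\pi\mbf{v}+(I-\pi)\mbf{v}$; these are $s_i$-orthogonal on every block, so $\norm{\mbf{v}}_s^2=\norm{\pi\mbf{v}}_s^2+\norm{(I-\pi)\mbf{v}}_s^2$, and applying (ii) to $(I-\pi)\mbf{v}$ bounds the second term by $\Lambda^{-1}\norm{(I-\pi)\mbf{v}}_a^2$.

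For (iv) and (v) I would apply the product rule $\nabla[(1-\chi_i^{k,k-1})\mbf{v}]=-\nabla\chi_i^{k,k-1}\otimes\mbf{v}+(1-\chi_i^{k,k-1})\nabla\mbf{v}$, the elementary bound $(a+b)^2\le 2a^2+2b^2$, and the support identity $\text{spt}(1-\chi_i^{k,k-1})\subset\Omega^\epsilon\setminus K_{i,k-1}$. The gradient-of-cutoff term is converted by $|\nabla\chi_i^{k,k-1}|^2\le\tilde\kappa$ into $\norm{\mbf{v}}_{s(\Omega^\epsilon\setminus K_{i,k-1})}^2$, while $|1-\chi_i^{k,k-1}|\le 1$ handles the other term. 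A localized version of (iii), namely $\norm{\mbf{v}}_{s(S)}^2\le\norm{\pi\mbf{v}}_{s(S)}^2+\Lambda^{-1}\norm{\mbf{v}}_{a(S)}^2$ with $S=\Omega^\epsilon\setminus K_{i,k-1}$, then yields both inequalities; for (v) the pointwise bound $|1-\chi_i^{k,k-1}|\le 1$ lets me skip the product-rule step entirely.

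\textbf{Main obstacle.} The only nontrivial point is the localized form of (iii). I must verify that the blockwise spectral inequality underpinning (ii) restricts cleanly to the union $S$ of coarse blocks outside $K_{i,k-1}$. This works because $\pi_j(\mbf{v})$ is supported in $K_j$, so on each block $(I-\pi)\mbf{v}|_{K_j}=(I-\pi_j)\mbf{v}|_{K_j}$ is $s_j$-orthogonal to the first $\ell_j$ eigenfunctions, giving $\norm{(I-\pi)\mbf{v}}_{s(K_j)}^2\le(\lambda_{\ell_j+1}^j)^{-1}\norm{(I-\pi)\mbf{v}}_{a(K_j)}^2$. Blockwise $a_i$-Pythagoras further yields $\norm{(I-\pi)\mbf{v}}_{a(K_j)}\le\norm{\mbf{v}}_{a(K_j)}$, and summing over $K_j\subset S$ produces the localized estimate needed to close (iv) and (v).
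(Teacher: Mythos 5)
Your proposal is correct and follows essentially the same route as the paper: spectral expansion in the $s_i$-orthonormal, $a_i$-orthogonal eigenbasis for (i)--(iii), then the product rule, $(a+b)^2\le 2a^2+2b^2$, the support of $1-\chi_i^{k,k-1}$, and the bound $|\nabla\chi_i^{k,k-1}|^2\lesssim\tilde\kappa$ together with a localized version of (iii) for (iv)--(v). Your explicit blockwise verification that (iii) localizes to $\Omega^\epsilon\setminus K_{i,k-1}$ is a detail the paper uses only implicitly, and your reading of ``$\mbf{v}\notin V_{\text{aux}}$'' as $\pi(\mbf{v})=0$ matches the paper's own interpretation.
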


\begin{proof}
Note that one can write $\bold{v}=\sum_{i=1}^N\sum_{j\geq 1} \alpha^i_j \phi^i_j$ with $\alpha_j^i \in \mathbb{R}$ for any $\bold{v}\in \mbf{V}_0$. 
\begin{enumerate}[label=(\roman*)]
\item Since $\mbf{v}\in V_{\text{aux}}$, then $\alpha^i_j=0$ for $j\geq \ell_i+1$. Using the local spectral problem \eqref{eqn:spectral}, we obtain
\[
\norm{\mbf{v}}_a^2=\sum_{i=1}^N\sum_{j= 1}^{\ell_i} \alpha^i_j a(\phi^i_j, \mbf{v})=\sum_{i=1}^N\sum_{j= 1}^{\ell_i} \alpha^i_j \lambda^i_j  s(\phi^i_j, \mbf{v})
\leq \Gamma \sum_{i=1}^N\sum_{j= 1}^{\ell_i} \alpha^i_j   s(\phi^i_j, \mbf{v})=\Gamma\norm{\mbf{v}}^2_s.
\]
\item For any $\bold{v}\notin V_{\text{aux}}$, one can write $\bold{v}=\sum_{i=1}^N\sum_{j\geq \ell_i+1} \alpha^i_j \phi^i_j$. Then, we have 
\[ 
\norm{\mbf{v}}_a^2=\sum_{i=1}^N\sum_{j\geq \ell_i+1} \alpha^i_j a(\phi^i_j, \mbf{v})=\sum_{i=1}^N\sum_{j\geq \ell_i+1} \alpha^i_j \lambda^i_j  s(\phi^i_j, \mbf{v})
\geq \Lambda \sum_{i=1}^N\sum_{j\geq \ell_i+1} \alpha^i_j   s(\phi^i_j, \mbf{v})=\Lambda\norm{\mbf{v}}^2_s.
\]
\item The result follows from (i), (ii), and the triangle inequality.
\item By using the property of cutoff function $\chi^{k,k-1}_i$ and (iii), we have
\begin{eqnarray*}
\begin{split}
\norm{(1-\chi^{k,k-1}_i)\mbf{v} }_a^2&\leq 2 \left( \int_{\Omega^\epsilon\setminus K_{i,k-1} } (1-\chi_i^{k,k-1})^2| \nabla \mbf{v}|^2+|\mbf{v}\nabla \chi_i^{k,k-1}|^2  dx\right)\\
&\leq 2\left (\norm{\mbf{v}}^2_{a(\Omega^\epsilon\setminus K_{i,k-1} )}+\norm{\mbf{v}}^2_{s(\Omega^\epsilon\setminus K_{i,k-1} )} \right) \\
&\leq 2(1+\Lambda^{-1}) \norm{\mbf{v}}^2_{a(\Omega^\epsilon\setminus K_{i,k-1} )}+2\norm{\pi\mbf{v}}^2_{s(\Omega^\epsilon\setminus K_{i,k-1} )}.
\end{split}
\end{eqnarray*}
\item  For any $k\geq 2$, we have
\begin{eqnarray*}
\begin{split}
\norm{(1-\chi^{k,k-1}_i)\mbf{v} }_s^2 &\leq \norm{\mbf{v}}^2_{s(\Omega^\epsilon\setminus K_{i,k-1}) }\\
&\leq \Lambda^{-1} \norm{\mbf{v}}^2_{a(\Omega^\epsilon\setminus K_{i,k-1}) }+\norm{\pi\mbf{v}}^2_{s(\Omega^\epsilon\setminus K_{i,k-1}) }.
\end{split}
\end{eqnarray*}
\end{enumerate}
This completes the proof. 
\end{proof}

First, we present the convergence of using global basis functions constructed in \eqref{eqn:min_relax}. We define $\bold{u_{\text{glo}}}\in V_{\text{glo}}$ as the global multiscale solution satisfying 
 \begin{eqnarray}\label{solution:u_glo}
 a(\bold{u_{\text{glo}}},\bold{v})= \left \langle \bold{f},\bold{v} \right \rangle \quad \text{for all } \mathbf{v}\in V_{\text{glo}}.
 \end{eqnarray}
 
 \begin{theorem}
 Let $\bold{u}$ be the solution of \eqref{solution:u} and $\bold{u_{\text{glo}}}$ be the solution of \eqref{solution:u_glo}. We have
$$
 \norm{\bold{u}-\bold{u_{\text{glo}}}}_a\lesssim  \Lambda^{-1/2} \norm{\tilde{\kappa}^{-1/2} \bold{f} }.
$$
Moreover, if $\{ \chi_j^{\text{ms}} \}_{j=1}^{N_c}$ is a set of bilinear partition of unity, we have 
$$ \norm{\mbf{u} - \mbf{u}_{\text{glo}}}_a \lesssim H \Lambda^{-1} \norm{\mbf{f} }.$$
 \end{theorem}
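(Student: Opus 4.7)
The strategy is to establish Galerkin orthogonality for the error $\mbf{e} := \mbf{u} - \mbf{u}_{\text{glo}}$, deduce via the orthogonal decomposition $\mbf{V}_0^{\text{div}} = V_{\text{glo}} \oplus_a \tilde{V}$ recorded just after \eqref{eqn:min_relax} that $\pi(\mbf{e}) = 0$, and then exploit the spectral gap via Lemma \ref{lemma:aux-ineq}(ii). First I would test \eqref{solution:u} against an arbitrary $\mbf{v} \in V_{\text{glo}} \subset \mbf{V}_0^{\text{div}}$: because $\mbf{v}$ is divergence-free the pressure term $b(\mbf{v},p)$ vanishes, so $a(\mbf{u},\mbf{v}) = \langle \mbf{f},\mbf{v}\rangle$. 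Subtracting \eqref{solution:u_glo} yields $a(\mbf{e},\mbf{v}) = 0$ for all $\mbf{v} \in V_{\text{glo}}$, and since $\mbf{e}$ is itself divergence-free the decomposition forces $\mbf{e} \in \tilde{V}$, hence $\pi(\mbf{e}) = 0$.

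Next I would derive the energy identity $\norm{\mbf{e}}_a^2 = \langle \mbf{f},\mbf{e}\rangle$. Writing $\norm{\mbf{e}}_a^2 = a(\mbf{e},\mbf{u}) - a(\mbf{e},\mbf{u}_{\text{glo}})$, the second term vanishes by the Galerkin orthogonality just established (with $\mbf{u}_{\text{glo}} \in V_{\text{glo}}$), while testing \eqref{solution:u} with $\mbf{e} \in \mbf{V}_0$ and using $\nabla\cdot\mbf{e}=0$ shows the first term equals $\langle \mbf{f},\mbf{e}\rangle$. For the first claimed bound, I would rewrite $\langle \mbf{f},\mbf{e}\rangle = \int_{\Omeps} \tilde{\kappa}^{-1/2}\mbf{f} \cdot \tilde{\kappa}^{1/2}\mbf{e}\,dx$, apply Cauchy--Schwarz to obtain $\abs{\langle\mbf{f},\mbf{e}\rangle} \leq \norm{\tilde{\kappa}^{-1/2}\mbf{f}}\,\norm{\mbf{e}}_s$, and invoke Lemma \ref{lemma:aux-ineq}(ii) on $\mbf{e}$ (since $\pi(\mbf{e})=0$) to get $\norm{\mbf{e}}_s \leq \Lambda^{-1/2}\norm{\mbf{e}}_a$. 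Dividing through gives $\norm{\mbf{e}}_a \lesssim \Lambda^{-1/2}\norm{\tilde{\kappa}^{-1/2}\mbf{f}}$.

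For the improved estimate under a bilinear partition of unity, the new ingredient is that $\abs{\nabla \chi_j^{\text{ms}}}$ is uniformly of order $H^{-1}$, so $\tilde{\kappa} = \sum_j \abs{\nabla \chi_j^{\text{ms}}}^2$ is pointwise comparable to $H^{-2}$ on every coarse block; this at once yields $\norm{\tilde{\kappa}^{-1/2}\mbf{f}} \lesssim H \norm{\mbf{f}}$, giving the factor $H$. To produce the additional $\Lambda^{-1/2}$ advertised in the statement, I would refine the duality step: writing $\langle \mbf{f},\mbf{e}\rangle = s(\tilde{\kappa}^{-1}\mbf{f},\mbf{e})$ and inserting the $s$-orthogonal projection $\pi(\tilde{\kappa}^{-1}\mbf{f}) \in V_{\text{aux}}$, the constraint $\pi(\mbf{e})=0$ lets me replace $\tilde{\kappa}^{-1}\mbf{f}$ by $(I-\pi)(\tilde{\kappa}^{-1}\mbf{f})$ inside the pairing, opening the door to a second application of the spectral inequality of Lemma \ref{lemma:aux-ineq}(ii). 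The main technical obstacle is precisely this second application: $\tilde{\kappa}^{-1}\mbf{f}$ need not lie in $\mbf{V}_0$, so the bound must be justified either on a coarse-block smoothing of $\mbf{f}$ for which Poincaré--Wirtinger inequalities are available (using that the constant vector fields are eigenfunctions of \eqref{eqn:spectral} with eigenvalue zero and therefore belong to $V_{\text{aux}}$) or through the weighted decomposition induced by $\pi$ on $\mbf{L}^2(\Omeps;\tilde{\kappa}\,dx)$. Once that step is controlled, combining with the pointwise bound $\tilde{\kappa}\sim H^{-2}$ furnishes the announced $H\Lambda^{-1}\norm{\mbf{f}}$ rate.
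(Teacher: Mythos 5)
Your proof of the first estimate is exactly the paper's: test \eqref{solution:u} against $\mbf{v}\in V_{\text{glo}}\subset\mbf{V}_0^{\text{div}}$ so the pressure term drops, subtract \eqref{solution:u_glo} to get Galerkin orthogonality, use the decomposition $\mbf{V}_0^{\text{div}}=V_{\text{glo}}\oplus_a\tilde V$ to conclude $\mbf{u}-\mbf{u}_{\text{glo}}\in\tilde V$ (hence $\pi(\mbf{u}-\mbf{u}_{\text{glo}})=0$), derive the energy identity $\norm{\mbf{u}-\mbf{u}_{\text{glo}}}_a^2=\langle\mbf{f},\mbf{u}-\mbf{u}_{\text{glo}}\rangle$, and finish with weighted Cauchy--Schwarz and Lemma \ref{lemma:aux-ineq}(ii). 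Nothing to add there.

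On the second estimate, you have over-engineered relative to the paper. The paper's entire justification is the observation you make first: for a bilinear partition of unity, $\abs{\nabla\chi_j^{\text{ms}}}=O(H^{-1})$, so $\tilde\kappa=O(H^{-2})$ and $\norm{\tilde\kappa^{-1/2}\mbf{f}}\lesssim H\norm{\mbf{f}}$. As you correctly note, substituting this into the first estimate produces $H\Lambda^{-1/2}\norm{\mbf{f}}$, not $H\Lambda^{-1}\norm{\mbf{f}}$; that exponent mismatch sits in the theorem statement itself and is not resolved by the paper's one-line proof either. Your proposed repair --- inserting $(I-\pi)$ into the pairing $\langle\mbf{f},\mbf{u}-\mbf{u}_{\text{glo}}\rangle=s(\tilde\kappa^{-1}\mbf{f},\mbf{u}-\mbf{u}_{\text{glo}})$ and applying Lemma \ref{lemma:aux-ineq}(ii) a second time to $(I-\pi)(\tilde\kappa^{-1}\mbf{f})$ --- does not close, for the reason you yourself flag: $\tilde\kappa^{-1}\mbf{f}$ is only in $\mbf{L}^2(\Omeps)$, Lemma \ref{lemma:aux-ineq}(ii) applies only to elements of $\mbf{V}_0$ expandable in the local eigenbasis, and even if it did apply it would trade $\norm{\cdot}_s$ for an $a$-norm of an $\mbf{f}$-dependent quantity that is not controlled by $\norm{\mbf{f}}$. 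The spectral gap is available only through the constraint $\pi(\mbf{u}-\mbf{u}_{\text{glo}})=0$, and you have already spent it once. You should present the second bound exactly as the paper does (i.e., as the $\tilde\kappa=O(H^{-2})$ substitution) and not attempt to manufacture the extra factor of $\Lambda^{-1/2}$.
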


\begin{proof}
By the definition of $\bold{u}$ and $\bold{u_{\text{glo}}}$, we have 
\begin{eqnarray}
 a(\bold{u}-\bold{u_{\text{glo}}}, \bold{v})=0 \quad \tforall ~\bold{v}\in V_{\text{glo}}.
\end{eqnarray}
Hence, we have $\bold{u}-\bold{u_{\text{glo}}} \in \tilde V$ and 
 \begin{eqnarray}\label{thm:ineqn1}
 \begin{aligned}
 a(\bold{u}-\bold{u_{\text{glo}}}, \bold{u}-\bold{u_{\text{glo}}})&=a(\bold{u}-\bold{u_{\text{glo}}}, \bold{u})=(\bold{f}, \bold{u}-\bold{u_{\text{glo}}}) \leq \norm{\tilde{\kappa}^{-\frac{1}{2}} \bold{f} } \norm{\mbf{u}-\mbf{u}_{\text{glo}}}_s.
 \end{aligned} 
 \end{eqnarray}
 Since $\mbf{u}-\mbf{u}_{\text{glo}}\in \mbf{V}_0-V_{\text{aux}}$, it follows from Lemma \eqref{lemma:aux-ineq} (ii) that 
\begin{eqnarray}\label{thm:ineqn2}
\norm{\mathbf{u}-\mathbf{u}_{\text{glo}}}^2_s\leq \Lambda^{-1}  \norm{\mathbf{u}-\mathbf{u}_{\text{glo}}}^2_a.
\end{eqnarray}
The result follows by combining \eqref{thm:ineqn1} and \eqref{thm:ineqn2}. The second part follows from the fact that $\abs{\nabla\chi_j^{\text{ms}}} = O(H^{-1})$ when $\{ \chi_j^{\text{ms}} \}_{j=1}^{N_c}$ is a set of bilinear partition of unity functions. 
\end{proof}

Next we analyze the convergence of the proposed multiscale method. We first recall Projection Theorem, which can be found in many functional analysis literature, e.g., \cite[Section 4.3]{ciarlet2013linear}.
\begin{theorem}[Projection Theorem]
Let $\mathcal{V}$ be a closed subspace of the Hilbert space $\mathcal{H}$ equipped with an inner product $(\cdot,\cdot)_{\mathcal{H}}$. Then, for any given element $f \in \mathcal{H}$, there exists a unique element $p \in \mathcal{V}$ such that 
$$ \norm{f- p} = \min_{v \in \mathcal{V}} \norm{f-v}.$$
Here, $\norm{\cdot}$ is the norm induced by the inner product $(\cdot,\cdot)_{\mathcal{H}}$. Moreover, the mapping $\mathbb{P} : f \mapsto p$ is linear and satisfies the inequality $\norm{\mathbb{P}f} \leq \norm{f}$ for any $f \in \mathcal{H}$. 
\end{theorem}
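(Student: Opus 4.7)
The plan is to prove the Projection Theorem by the classical minimizing-sequence argument driven by the parallelogram identity, followed by extracting linearity and the contraction bound from the orthogonal decomposition $\mathcal{H} = \mathcal{V} \oplus \mathcal{V}^\perp$. The key structural observation is that in an inner-product space the parallelogram identity converts the infimum condition defining the projection into a Cauchy-type estimate on any minimizing sequence.

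First I would prove existence. Set $d := \inf_{v \in \mathcal{V}} \norm{f-v}$ and pick a sequence $\{v_n\} \subset \mathcal{V}$ with $\norm{f-v_n} \to d$. Applying the parallelogram identity to the pair $f-v_n$ and $f-v_m$ gives
$$\norm{v_n - v_m}^2 = 2\norm{f-v_n}^2 + 2\norm{f-v_m}^2 - 4\norm{f - \tfrac{v_n+v_m}{2}}^2.$$
Since $\tfrac{1}{2}(v_n+v_m) \in \mathcal{V}$, the subtracted term is at least $4d^2$, while the first two terms tend to $2d^2$ each. Hence $\{v_n\}$ is Cauchy in the complete space $\mathcal{H}$, and its limit $p$ lies in $\mathcal{V}$ because $\mathcal{V}$ is closed. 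Continuity of the norm forces $\norm{f-p} = d$. Uniqueness follows from the same identity applied to any two minimizers $p,p'$, which yields $\norm{p-p'}=0$.

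Next I would establish the orthogonal characterization: $p \in \mathcal{V}$ is the minimizer if and only if $(f-p, v)_{\mathcal{H}} = 0$ for every $v \in \mathcal{V}$. For necessity, fix $v \in \mathcal{V}$ and note that $\phi(t) := \norm{f - (p+tv)}^2$ is a real quadratic in $t$ with minimum at $t=0$; $\phi'(0)=0$ gives $\operatorname{Re}(f-p,v)=0$, and over complex scalars one additionally varies along $iv$ to kill the imaginary part. Sufficiency is an immediate Pythagoras computation: for any $w \in \mathcal{V}$ one has $p-w \in \mathcal{V}$ and therefore $\norm{f-w}^2 = \norm{f-p}^2 + \norm{p-w}^2 \geq \norm{f-p}^2$.

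Linearity and the contraction inequality then follow from this orthogonal characterization. For any scalars $\alpha, \beta$ and any $v \in \mathcal{V}$,
$$(\alpha f + \beta g - \alpha \mathbb{P}f - \beta \mathbb{P}g,\, v)_{\mathcal{H}} = \alpha(f - \mathbb{P}f, v)_{\mathcal{H}} + \beta(g - \mathbb{P}g, v)_{\mathcal{H}} = 0,$$
so $\alpha \mathbb{P}f + \beta \mathbb{P}g$ satisfies the defining orthogonality of $\mathbb{P}(\alpha f + \beta g)$ and coincides with it by uniqueness. For the norm bound, Pythagoras gives $\norm{f}^2 = \norm{f - \mathbb{P}f}^2 + \norm{\mathbb{P}f}^2 \geq \norm{\mathbb{P}f}^2$. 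The main obstacle, though modest, is really the existence step: the crucial realization is that the parallelogram identity is precisely what transfers closeness to the infimum into Cauchy-ness of the minimizing sequence, a property which fails in general Banach spaces and is what makes the Hilbert-space hypothesis essential.
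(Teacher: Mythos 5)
Your proof is correct and complete: the minimizing-sequence argument via the parallelogram identity, the orthogonality characterization, and the Pythagorean bound $\norm{f}^2 = \norm{f-\mathbb{P}f}^2 + \norm{\mathbb{P}f}^2$ are exactly the standard argument for this result, which the paper itself does not prove but simply cites from the functional analysis literature (Ciarlet, Section 4.3). Nothing is missing, and no comparison is needed beyond noting that you have supplied the textbook proof the paper takes for granted.
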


In the following lemma, we show the existence of a projection from $\mathbf{V}_0(D) $ to $\mathbf{V}_0^{\text{div}}(D)$ using the Projection Theorem. 
\begin{lemma}\label{lemma:projection}
Let $\mathcal{D}\subseteq \Omeps$. Then, there exists a divergence-free projection $\mathbb{P}_\mathcal{D}: \mathbf{V}_0(\mathcal{D})\to \mathbf{V}_0^{\text{div}}(\mathcal{D})$, where $\mathbf{V}_0^{\text{div}}(\mathcal{D}):=\{ \mbf{v}\in\mathbf{V}_0(\mathcal{D}): b(\mbf{v},q)=0 \tforall~ q\in L^2(\mathcal{D}) \} $.
\end{lemma}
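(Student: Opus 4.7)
The plan is to invoke the Projection Theorem stated immediately above, taking $\mathcal{H} = \mathbf{V}_0(\mathcal{D})$ equipped with the inner product $a(\cdot,\cdot)$ restricted to $\mathcal{D}$, and $\mathcal{V} = \mathbf{V}_0^{\text{div}}(\mathcal{D})$. Two ingredients need to be checked: first that $(\mathbf{V}_0(\mathcal{D}), a(\cdot,\cdot))$ is a Hilbert space, and second that $\mathbf{V}_0^{\text{div}}(\mathcal{D})$ is a closed linear subspace. The conclusion then follows as a direct application.

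For the first ingredient, I would note that elements of $\mathbf{V}_0(\mathcal{D})$ vanish on $\partial \mathcal{D}$ by construction, so the Poincar\'e inequality applies and the energy norm $\|\cdot\|_a$ is equivalent to the usual $\mathbf{H}^1$-norm on $\mathbf{V}_0(\mathcal{D})$. Since $\mathbf{V}_0(\mathcal{D})$ is a closed subspace of $\mathbf{H}^1(\mathcal{D})^d$ (zero trace is a closed condition), it is a Hilbert space under $a(\cdot,\cdot)$.

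For the second ingredient, linearity of $\mathbf{V}_0^{\text{div}}(\mathcal{D})$ is immediate from linearity of $b(\cdot, q)$ in its first argument. For closedness, fix $q \in L^2(\mathcal{D})$; by Cauchy--Schwarz and the continuity of $\nabla\cdot\colon \mathbf{H}^1(\mathcal{D})^d \to L^2(\mathcal{D})$, the functional $\mbf{v} \mapsto b(\mbf{v},q)$ is a bounded linear functional on $(\mathbf{V}_0(\mathcal{D}), \|\cdot\|_a)$, so its kernel is closed. Since
\[
\mathbf{V}_0^{\text{div}}(\mathcal{D}) = \bigcap_{q \in L^2(\mathcal{D})} \{\mbf{v} \in \mathbf{V}_0(\mathcal{D}) : b(\mbf{v},q) = 0\},
\]
it is closed as an intersection of closed subspaces. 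Equivalently, $\mathbf{V}_0^{\text{div}}(\mathcal{D})$ is the kernel of the continuous divergence operator, which is closed.

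With these two facts, the Projection Theorem produces, for each $\mbf{v} \in \mathbf{V}_0(\mathcal{D})$, a unique element $\mathbb{P}_{\mathcal{D}} \mbf{v} \in \mathbf{V}_0^{\text{div}}(\mathcal{D})$ realizing $\min_{\mbf{w} \in \mathbf{V}_0^{\text{div}}(\mathcal{D})} \|\mbf{v}-\mbf{w}\|_a$, together with linearity of $\mathbb{P}_{\mathcal{D}}$ and the contractive bound $\|\mathbb{P}_{\mathcal{D}} \mbf{v}\|_a \le \|\mbf{v}\|_a$. I do not foresee a genuine obstacle here; the only point requiring a slight touch is confirming that the condition ``$b(\mbf{v},q)=0$ for all $q \in L^2(\mathcal{D})$'' indeed defines a closed subspace rather than merely a linear one, and the argument above handles this via the continuity of $b$ on the relevant spaces.
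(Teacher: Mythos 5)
Your proposal is correct and follows the same overall strategy as the paper's proof: verify that $\mathbf{V}_0^{\text{div}}(\mathcal{D})$ is a closed subspace of a Hilbert space and then invoke the Projection Theorem. The one substantive difference is the choice of inner product. You project orthogonally with respect to $a(\cdot,\cdot)$ alone (justifying completeness via the Poincar\'e inequality), whereas the paper equips $\mathbf{V}_0(\mathcal{D})$ with $(\mbf{u},\mbf{v})_{as(\mathcal{D})} := a_{\mathcal{D}}(\mbf{u},\mbf{v}) + s_{\mathcal{D}}(\mbf{u},\mbf{v})$ and projects orthogonally in that metric. For the lemma as literally stated, either choice works, and your closedness argument (an intersection of kernels of the bounded functionals $\mbf{v}\mapsto b(\mbf{v},q)$) is if anything cleaner than the paper's sequential argument. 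However, the paper's choice of inner product is not cosmetic: the remark following the lemma records $\norm{\mathbb{P}_{\mathcal{D}}(\mbf{v})}_{as(\mathcal{D})} \le \norm{\mbf{v}}_{as(\mathcal{D})}$, and this non-expansiveness in the combined $a$-plus-$s$ norm is precisely what is used downstream, e.g.\ in the proof of Lemma \ref{lemma:properties} and repeatedly in Lemma \ref{lem:exponential-decay}, where quantities such as $\norm{\mathbb{P}(\chi_i^{k-1,k-2}\psi_j^i)}_{as(K_{i,k-1}\setminus K_{i,k-2})}$ must be controlled. The projection you construct is only guaranteed non-expansive in $\norm{\cdot}_a$, so carrying your version forward would force you to re-derive or replace those $as$-norm bounds. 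In short: your proof of the lemma stands, but you should switch to the $(\cdot,\cdot)_{as}$ inner product so that the operator you define is the one the rest of the analysis actually relies on.
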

\begin{proof}
Define a bilinear form on $\mathbf{V}_0(\mathcal{D})$ as follows: $(\mbf{u},\mbf{v})_{as(\mathcal{D})}:=a_\mathcal{D}(\mbf{u},\mbf{v})+s_\mathcal{D}(\mbf{u},\mbf{v})$, where $a_\mathcal{D}(\cdot,\cdot)$ and $s_\mathcal{D}(\cdot,\cdot)$ are the restriction of $a(\cdot,\cdot)$ and $s(\cdot,\cdot)$ on the subregion $\mathcal{D}$. One can easily show that $(\cdot,\cdot)_{as}$ is an inner product defined on $\mbf{V}_0(\mathcal{D})$. 

Next, we show that $\mathbf{V}_0^{\text{div}}(\mathcal{D}) $ is a closed subspace of $\mathbf{V}_0(\mathcal{D}) $ with respect to the inner product $(\cdot, \cdot)_{as}$. Let $\{\mbf{f}_n\}$ be a sequence in $\mathbf{V}_0^{\text{div}}(\mathcal{D}) $ that converges to $\mbf{f}$ in $\mathbf{V}_0(\mathcal{D}) $. Since $\mbf{f}_n\in \mathbf{V}_0^{\text{div}}(\mathcal{D})$, then we have $b(\mbf{f}_n,g)=0 \tforall g\in L^2(\mathcal{D})$. Then $\displaystyle{\lim_{n\to \infty} b(\mbf{f}_n,g)=b(\mbf{f},g)=0} \tforall g\in L^2(\mathcal{D})$. It implies that $\mbf{f}\in \mathbf{V}_0^{\text{div}}(\mathcal{D})$. Consequently, $\mathbf{V}_0^{\text{div}}(\mathcal{D})$ is a closed subspace of $\mathbf{V}_0(\mathcal{D}) $. An application of Projection Theorem proves the desired result.
\end{proof}

\begin{remark}
We denote $\norm{\cdot}_{as(\mathcal{D})}$ the norm induced by the inner product $(\cdot,\cdot)_{as(\mathcal{D})}$. Then, we have $\norm{\mathbb{P}_\mathcal{D}(\mbf{v}) }_{as(\mathcal{D})}\leq \norm{\mbf{v}}_{as(\mathcal{D})}$ for any $v \in \mbf{V}_0(\mathcal{D})$. We simply write $\norm{\cdot}_{as}$ in short for $\norm{\cdot}_{as(\mathcal{D})}$ when $\mathcal{D} = \Omeps$. 
Moreover, the subscript $\mathcal{D}$ will be dropped from $\mathbb{P}_\mathcal{D}$ when there is no ambiguity. 
\end{remark}

\begin{lemma}\label{lemma:properties}
For any auxiliary function $v_{\text{aux}} \in V_{\text{aux}}$, there exists a function $z \in \mathbf{V}_0^{\text{div}}$ such that 
$$ \pi (z) = v_{\text{aux}}, \quad \norm{z}_a^2 \leq D \norm{v_{\text{aux}}}_s^2, \tand \spt(z) \subseteq \spt (v_{\text{aux}}).$$
Here, $D$ is a generic constant depending only on the coarse mesh, the partition of unity, and the eigenvalues obtained in \eqref{eqn:spectral}.
\end{lemma}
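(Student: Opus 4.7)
The plan is to reduce the global assertion to a per-block construction, exploiting the direct-sum structure $V_{\text{aux}} = \bigoplus_{i=1}^N V_{\text{aux}}^i$. Writing $v_{\text{aux}} = \sum_i v_{\text{aux}}^i$ with $v_{\text{aux}}^i \in V_{\text{aux}}^i$, I would construct, for each $i$ with $v_{\text{aux}}^i \neq 0$, a local field $z_i \in \mathbf{V}_0(K_i)$ that is divergence-free on $K_i$, satisfies $\pi_i(z_i) = v_{\text{aux}}^i$, and obeys $\norm{z_i}_{a(K_i)} \lesssim \norm{v_{\text{aux}}^i}_{s(K_i)}$. Extending each $z_i$ by zero outside $K_i$ and setting $z := \sum_i z_i$, the zero-trace property on $\partial K_i$ combined with divergence-free on $K_i$ yields $z \in \mathbf{V}_0^{\text{div}}$; the essentially disjoint supports give $\spt(z) \subseteq \spt(v_{\text{aux}})$; the fact that each $\pi_j$ samples only values on $K_j$ gives $\pi(z) = \sum_i \pi_i(z_i) = v_{\text{aux}}$; and Pythagorean summation across coarse blocks converts the per-block estimates into $\norm{z}_a^2 \leq D \norm{v_{\text{aux}}}_s^2$.

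For the block-local step, the natural formulation is the constrained minimization
\[
z_i := \argmin\left\{ \tfrac12 a_i(z,z) : z \in \mathbf{V}_0(K_i),\; \nabla \cdot z = 0,\; \pi_i(z) = v_{\text{aux}}^i \right\},
\]
whose Euler--Lagrange system introduces a pressure multiplier $\xi_i \in Q_0(K_i)$ for the divergence constraint and a vector multiplier $\eta_i \in V_{\text{aux}}^i$ for the projection constraint. Coercivity of $a_i$ on $\mathbf{V}_0(K_i)$ comes from the Poincar\'e inequality on the perforated block, and the inf-sup condition for $b_i$ on $\mathbf{V}_0(K_i) \times Q_0(K_i)$ is exactly Lemma \ref{lem:in-sur}.

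I expect the main obstacle to be verifying the compatibility of these two linear constraints on $\mathbf{V}_0(K_i)$, equivalently showing that $\pi_i$ restricted to $\mathbf{V}_0^{\text{div}}(K_i) := \{v \in \mathbf{V}_0(K_i) : \nabla \cdot v = 0\}$ surjects onto the finite-dimensional auxiliary space $V_{\text{aux}}^i$. My plan is to argue by contradiction: if some nonzero $\mu \in V_{\text{aux}}^i$ satisfied $s_i(z, \mu) = 0$ for every $z \in \mathbf{V}_0^{\text{div}}(K_i)$, then the de~Rham/Helmholtz characterization (together with the inf-sup result of Lemma \ref{lem:in-sur}) would force $\tilde\kappa\,\mu = \nabla p$ for some $p \in L_0^2(K_i)$; combining this identity with the Neumann-type spectral relation \eqref{eqn:spectral} satisfied by $\mu$ and integrating by parts against $\mu$ itself should yield $\mu = 0$, the desired contradiction.

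Once surjectivity is in hand, solvability and stability of the saddle-point system follow from the standard Brezzi framework. The energy estimate then comes from testing the Euler--Lagrange equations with $(z_i, \xi_i, \eta_i)$, which cancels the multiplier terms to leave $\norm{z_i}_{a(K_i)}^2 = s_i(v_{\text{aux}}^i, \eta_i)$; extracting an inf-sup bound on $\eta_i$ from the first Euler--Lagrange equation, by testing against a lift of $\eta_i$ back into $\mathbf{V}_0(K_i)$, gives $\norm{\eta_i}_s \lesssim \norm{z_i}_{a(K_i)}$ and hence the block bound $\norm{z_i}_{a(K_i)} \lesssim \norm{v_{\text{aux}}^i}_{s(K_i)}$. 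The constant $D$ then collects the local Poincar\'e and inf-sup constants, which depend only on the coarse mesh, the partition of unity, and the spectral data $\{\lambda_j^i\}$, as asserted.
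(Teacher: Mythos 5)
Your overall architecture --- reduce to a single coarse block by the direct-sum structure, pose a constrained minimization with a multiplier enforcing $\pi_i(z)=v_{\text{aux}}^i$, and extract the energy bound from the Brezzi framework by testing with the solution and with a lift of the multiplier --- is the same as the paper's, and the block-by-block assembly, the support statement, and the final testing computation are all sound. The proof stands or falls on the one step you flag yourself: showing that $\pi_i$ restricted to $\mathbf{V}_0^{\text{div}}(K_i)$ surjects onto $V_{\text{aux}}^i$ with a quantitative inf-sup constant. Your proposed resolution of that step does not close. If $\mu\in V_{\text{aux}}^i$ satisfies $s_i(z,\mu)=0$ for all divergence-free $z\in\mathbf{V}_0(K_i)$, de~Rham indeed gives $\tilde\kappa\,\mu=\nabla p$ for some $p\in L^2(K_i)$; but testing this identity against $\mu$ yields $\norm{\mu}_{s(K_i)}^2=\int_{K_i}\nabla p\cdot\mu\,dx=-\int_{K_i}p\,(\nabla\cdot\mu)\,dx+\int_{\partial K_i}p\,\mu\cdot n\,ds$, and neither term on the right vanishes: $\mu$ lives in $\mathbf{V}(K_i)$, so it has neither zero trace on $\partial K_i$ nor zero divergence, and the Neumann-type spectral relation \eqref{eqn:spectral} --- which for a linear combination of eigenfunctions does not even involve a single eigenvalue --- gives no handle on $\nabla\cdot\mu$ or on $\mu\cdot n$. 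Indeed, for special weights (e.g.\ $\tilde\kappa$ depending on one coordinate only) the constant eigenfunction is exactly of the form $\tilde\kappa^{-1}\nabla p$, so no argument of this shape can succeed without using more structure of $\tilde\kappa$. Moreover, even if surjectivity were established qualitatively, finite-dimensionality of $V_{\text{aux}}^i$ would only yield \emph{some} inf-sup constant per block, with no control of the type claimed for $D$.

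The paper sidesteps the contradiction argument entirely by exhibiting an explicit near-preimage: with $B:=\prod_{j\in\mathcal{I}_{K_i}}\chi_j^{\text{ms}}$ (a bubble built from the partition of unity, vanishing on $\partial K_i$) and the divergence-free projection $\mathbb{P}$ of Lemma \ref{lemma:projection}, it takes $z=\mathbb{P}(Bv_{\text{aux}})\in\mathbf{V}_0^{\text{div}}(K_i)$ and verifies both halves of the inf-sup condition directly: a lower bound $s_i(z,v_{\text{aux}})\geq C_\pi^{-1}\norm{v_{\text{aux}}}_{s(K_i)}^2$, where the nondegeneracy you are trying to prove is packaged into the positivity of the constant $C_\pi$, and an upper bound $\norm{z}_{a(K_i)}^2\lesssim(1+\Gamma)\norm{v_{\text{aux}}}_{s(K_i)}^2$ obtained from the contraction property $\norm{\mathbb{P}(\cdot)}_{as(K_i)}\leq\norm{\cdot}_{as(K_i)}$, the bound $\abs{\nabla B}^2\lesssim\tilde\kappa$, and Lemma \ref{lemma:aux-ineq}(i). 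To repair your proof you should either adopt this constructive verification or supply an actual proof that no nonzero $\mu\in V_{\text{aux}}^i$ can be $s_i$-orthogonal to $\mathbf{V}_0^{\text{div}}(K_i)$, together with a uniform lower bound on the resulting inf-sup constant.
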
 
\begin{proof}
Without loss of generality, we can assume that $v_{\text{aux}} \in V_{\text{aux}}^i$. Consider the following variational problem: Find $z \in \mbf{V}_0^{\text{div}}(K_i)$ 
and $\mu \in V_{\text{aux}}^i$ such that 
\begin{eqnarray}
\begin{split}
a_i(z,v) + s_i(v,\mu) & = 0 &\quad \tforall v \in \mbf{V}_0^{\text{div}}(K_i), \\
s_i (z, q) & = s_i(v_{\text{aux}},q) &\quad \tforall q \in V_{\text{aux}}^i.
\end{split}
\label{eqn:local-var}
\end{eqnarray}
Here, the bilinear forms $a_i(\cdot,\cdot)$ and $s_i(\cdot,\cdot)$ are defined in \eqref{eqn:local_form}. We will show the well-posedness of the problem \eqref{eqn:local-var}. It suffices to show that there is a function $z \in \mbf{V}_0^{\text{div}}(K_i)$ such that 
$$ s_i(z, v_{\text{aux}} ) \geq C_1 \norm{v_{\text{aux}}}_{s(K_i)}^2 \tand \norm{z}_{a(K_i)}^2 \leq C_2 \norm{v_{\text{aux}}}_{s(K_i)}^2$$
for some generic constants $C_1$ and $C_2$. We denote $\mathcal{I}_{K_i} := \{ j: x_j \text{ is a coarse vertex of } K_i \}$ and define $B := \prod_{j \in \mathcal{I}_{K_i}} \chi_j^{\text{ms}}$. Taking  $z =\mbp( Bv_{\text{aux}})$, we have 
$$s_i(z,v_{\text{aux}}) =  s_i(\mathbb{P}( Bv_{\text{aux}}), v_{\text{aux}}) = \int_{K_i} \tilde \kappa \mbp(Bv_{\text{aux}})v_{\text{aux}} ~ dx \geq C_{\pi}^{-1} \norm{v_{\text{aux}}}_{s(K_i)}^2. $$
Here, the constant $C_{\pi}$ is defined to be 
$$ C_{\pi} := \sup_{K \in \mathcal{T}^H,~ \mu \in V_{\text{aux}}} \frac{ \int_K \tilde \kappa \mu^2 ~ dx}{\int_K \tilde \kappa  \mathbb{P}( B\mu) \mu ~ dx}>0.$$
Note that $\abs{B} \leq 1$ and $\abs{\nabla B}^2 \leq C_\mathcal{T} \sum_{j \in \mathcal{I}_{K_i}} \abs{\chi_j^{\text{ms}}}^2$ with $C_{\mathcal{T}} := \displaystyle{\max_{K \in \mathcal{T}^H} \abs{\mathcal{I}_K}^2}$. The following inequalities hold
$$\norm{z}_{a(K_i)} \leq \norm{\mbp( Bv_{\text{aux}}) }_{as(K_i)} \leq \norm{ Bv_{\text{aux}}}_{as(K_i)}\lesssim \norm{ Bv_{\text{aux}}}_{a(K_i)}.$$
Then, we have 
 $$\norm{z}_{a(K_i)}^2 \lesssim \norm{B v_{\text{aux}}}_{a(K_i)}^2\leq C_{\mathcal{T}} C_{\pi} (1+ \Gamma) \norm{v_{\text{aux}}}_{s(K_i)}^2.$$
It shows the existence and uniqueness of the function $z$ for a given auxiliary function $v_{\text{aux}} \in V_{\text{aux}}^i$. From the second equality in \eqref{eqn:local-var}, we see that $\pi_i(z) = v_{\text{aux}}$. This completes the proof.  
\end{proof}

The following lemma shows that the global multiscale basis functions have a decay property. 
 \begin{lemma} \label{lem:exponential-decay}
Let $\phi_j^i \in V_{\text{aux}}$ be a given auxiliary function. Suppose that $\psi_{j,\text{ms}}^i$ is a multiscale basis function obtained in \eqref{eqn:min_relax_ms} over the oversampling domain $K_{i,k}$ with $k \geq 2$ and $\psi_j^i$ is the corresponding global basis function obtained in \eqref{eqn:min_relax}. Then, the following estimate holds: 
$$ 
\norm{\psi^i_j-\psi^i_{j,\text{ms}} }_a^2  + \norm{\pi(\psi^i_j-\psi^i_{j,\text{ms}})}_s^2 \leq 
E\left ( \norm{\psi_j^i}_a^2 + \norm{\pi(\psi_j^i)}_s^2 \right ),
$$
where $E = 3\left ( 1+\Lambda^{-1} \right ) \left ( 1+ \left [6 (1+\Lambda^{-1})\right]^{-1/2}  \right )^{1-k}$ is a factor of exponential decay. 
\end{lemma}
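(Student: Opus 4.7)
The plan is to follow the standard CEM-GMsFEM exponential-decay strategy, adapted to the Stokes setting by maintaining divergence-freeness of all test functions via Lemma~\ref{lem:in-sur}. First I would set $\eta := \psi_j^i - \psi_{j,\text{ms}}^i$, extending $\psi_{j,\text{ms}}^i$ by zero outside $K_{i,k}$; since both basis functions are divergence-free, so is $\eta$. Subtracting \eqref{eqn:min_relax_ms} from \eqref{eqn:min_relax} evaluated at a common test function $v \in \mbf{V}_0(K_{i,k}) \subseteq \mbf{V}_0$ yields
\begin{equation*}
a(\eta,v) + s(\pi\eta, \pi v) + b(v, \xi_j^i - \xi_{j,\text{ms}}^i) = 0.
\end{equation*}
Restricting to $v \in \mbf{V}_0^{\text{div}}(K_{i,k})$ kills the pressure term and produces the Galerkin-type orthogonality $a(\eta,v) + s(\pi\eta, \pi v) = 0$ for every such $v$.

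Next I would construct a localized divergence-free test function of the form $v := \chi\eta - \tilde w$, where $\chi := \chi_i^{k,k-1}$. The product $\chi\eta$ is supported in $K_{i,k}$, but its divergence $\nabla\chi\cdot\eta$ is concentrated on the annulus $K_{i,k}\setminus K_{i,k-1}$ (since $\eta$ is divergence-free). Lemma~\ref{lem:in-sur} applied on $K_{i,k}$ supplies $\tilde w \in \mbf{V}_0(K_{i,k})$ with $\nabla\cdot\tilde w = \nabla\chi\cdot\eta$ and $\norm{\tilde w}_a \lesssim \norm{\nabla\chi\cdot\eta}$, so $v$ is divergence-free and supported in $K_{i,k}$. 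Substituting $v$ into the orthogonality and using $\eta - v = (1-\chi)\eta + \tilde w$ gives
\begin{equation*}
\norm{\eta}_a^2 + \norm{\pi\eta}_s^2 = a(\eta, (1-\chi)\eta + \tilde w) + s(\pi\eta, \pi((1-\chi)\eta + \tilde w)).
\end{equation*}
Cauchy–Schwarz together with Lemma~\ref{lemma:aux-ineq}(iv)–(v) bounds the $(1-\chi)\eta$ contributions by the energy of $\eta$ on $\Omeps\setminus K_{i,k-1}$. For the $\tilde w$ terms, the partition-of-unity property $\abs{\nabla\chi}^2 \lesssim \tilde\kappa$ gives $\norm{\nabla\chi\cdot\eta} \lesssim \norm{\eta}_{s(K_{i,k}\setminus K_{i,k-1})}$, and Lemma~\ref{lemma:aux-ineq}(iii) converts this $s$-norm into an $a$-norm plus $\pi\eta$ on the same annulus. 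The upshot is a Caccioppoli-type inequality
\begin{equation*}
\norm{\eta}_a^2 + \norm{\pi\eta}_s^2 \leq 6(1+\Lambda^{-1})\bigl(\norm{\eta}_{a(\Omeps\setminus K_{i,k-1})}^2 + \norm{\pi\eta}_{s(\Omeps\setminus K_{i,k-1})}^2\bigr).
\end{equation*}

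The last step is to turn this Caccioppoli estimate into exponential decay. Splitting the total energy into the parts on $K_{i,k-1}$ and on its complement, the inequality forces the complement energy to exceed a fixed proportion of the whole; re-applying the same argument with the shrinking cutoffs $\chi_i^{k-1,k-2}, \chi_i^{k-2,k-3}, \ldots$ yields a recursion whose solution is the geometric factor $(1 + [6(1+\Lambda^{-1})]^{-1/2})^{-(k-1)}$, anchored against the energy of the global basis $\psi_j^i$ as measured by $\norm{\psi_j^i}_a^2 + \norm{\pi\psi_j^i}_s^2$. Absorbing the leading prefactor into $3(1+\Lambda^{-1})$ produces the stated $E$. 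I expect the main obstacle to be the divergence correction $\tilde w$: beyond the $a$-norm bound supplied directly by Lemma~\ref{lem:in-sur}, one must also control $\norm{\tilde w}_s$ and $\norm{\pi\tilde w}_s$ uniformly in $k$, and one must verify that the inf-sup constant of Lemma~\ref{lem:in-sur} for the chain of nested oversampling regions does not degrade with $k$. Lemma~\ref{lemma:aux-ineq}(iii), together with the fact that $\nabla\cdot\tilde w$ is supported on the annulus of controlled thickness, is the key technical ingredient that keeps the contraction rate independent of the oversampling index.
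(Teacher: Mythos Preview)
Your first step (Galerkin orthogonality for $\eta:=\psi_j^i-\psi_{j,\text{ms}}^i$ against divergence-free test functions in $K_{i,k}$) is fine, and constructing a divergence-free localisation of $\eta$ by an explicit correction via Lemma~\ref{lem:in-sur} is a legitimate alternative to the paper's orthogonal projection $\mathbb{P}_{\mathcal{D}}$ of Lemma~\ref{lemma:projection}. But the argument breaks at the recursion.

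Your Caccioppoli-type bound reads $\norm{\eta}_a^2+\norm{\pi\eta}_s^2 \le C\bigl(\norm{\eta}_{a(\Omeps\setminus K_{i,k-1})}^2+\norm{\pi\eta}_{s(\Omeps\setminus K_{i,k-1})}^2\bigr)$, i.e.\ the \emph{total} energy of $\eta$ is controlled by its \emph{exterior} energy. Replacing $\chi_i^{k,k-1}$ by $\chi_i^{m,m-1}$ with $m<k$ still yields a valid test function in $\mbf{V}_0^{\text{div}}(K_{i,k})$, but the resulting inequality is $\text{total}\le C\cdot\text{exterior}(K_{i,m-1})$, which is \emph{weaker} because the exterior grows as $m$ decreases. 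There is no contraction here: you never obtain $\text{exterior}(K_{i,m})\le \rho\cdot\text{exterior}(K_{i,m-1})$ with $\rho<1$, and nothing in the orthogonality for $\eta$ can produce it, since that orthogonality lives only on $K_{i,k}$ and gives no information on annuli outside.

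The paper proceeds differently. From the orthogonality it first derives a \emph{best-approximation} bound $\norm{\eta}_a^2+\norm{\pi\eta}_s^2 \le \norm{\psi_j^i-w}_a^2+\norm{\pi(\psi_j^i-w)}_s^2$ for any $w\in\mbf{V}_0^{\text{div}}(K_{i,k})$, and chooses $w=\mathbb{P}(\chi_i^{k,k-1}\psi_j^i)$. This places the \emph{global} basis $\psi_j^i$, not $\eta$, on the right-hand side. The recursion is then carried out on $\psi_j^i$ alone by testing the \emph{global} problem \eqref{eqn:min_relax} with $v=\mathbb{P}\bigl((1-\chi_i^{m,m-1})\psi_j^i\bigr)$; because $\mathrm{spt}(v)\cap K_i=\emptyset$ the source $s(\phi_j^i,\pi v)$ vanishes, and one extracts $\norm{\psi_j^i}^2_{a(\Omeps\setminus K_{i,m})}+\norm{\pi\psi_j^i}^2_{s(\Omeps\setminus K_{i,m})}\le [6(1+\Lambda^{-1})]^{1/2}\cdot(\text{annulus energy})$, which \emph{does} iterate. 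The essential ingredient you are missing is this use of the global equation with test functions supported away from $K_i$; the localisation error of $\eta$ itself does not carry a recursive structure. A secondary point: the projection $\mathbb{P}$ is contractive in the combined $as$-norm, so the $s$-norm control you flag as an obstacle for $\tilde w$ is automatic in the paper's approach.
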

\begin{proof}
Subtracting the first equation of \eqref{eqn:min_relax_ms} from that of \eqref{eqn:min_relax}, we obtain 
$$ a(\psi^i_j-\psi^i_{j,\text{ms}}, v) + s(\pi(\psi^i_j-\psi^i_{j,\text{ms}}), \pi(v)) + b(v, \xi_j^i - \xi_{j,\text{ms}}^i) = 0 \quad \tforall v \in \mathbf{V}_0(K_{i,k}).$$
Taking $v = w - \psi_{j,\text{ms}}^i$ with $w\in \mathbf{V}_0^{\text{div}}(K_{i,k})$, 
then we have
\begin{equation}
a(\psi^i_j-\psi^i_{j,\text{ms}}, \psi_{j,\text{ms}}^i ) + s(\pi(\psi^i_j-\psi^i_{j,\text{ms}}), \pi(\psi_{j,\text{ms}}^i)) = a(\psi^i_j-\psi^i_{j,\text{ms}}, w ) + s(\pi(\psi^i_j-\psi^i_{j,\text{ms}}), \pi(w)).\label{eqn:decay0}
\end{equation}
Utilizing \eqref{eqn:decay0} and Cauchy-Schwarz inequality, one can show that 
$$\norm{\psi^i_j-\psi^i_{j,\text{ms}} }_a^2 + \norm{\pi(\psi^i_j-\psi^i_{j,\text{ms}})}_s^2 \leq  \norm{\psi_j^i - w}_a^2 + \norm{\pi(\psi_j^i - w)}_s^2$$
for any $w \in \mathbf{V}^{\text{div}}_0(K_{i,k})$. 
Let $w = \mathbb{P}(\chi_i^{k,k-1} \psi_j^i)$. Note that $\psi_j^i = \mathbb{P}(\psi_j^i)$. 
Then, we have 
\begin{eqnarray}
\begin{split}
\norm{\psi^i_j-\psi^i_{j,\text{ms}} }_a^2 + \norm{\pi(\psi^i_j-\psi^i_{j,\text{ms}})}_s^2 & \leq  \norm{\psi_j^i - \mathbb{P}(\chi_i^{k,k-1} \psi_j^i)}_a^2 + \norm{\pi(\psi_j^i -\mathbb{P}(\chi_i^{k,k-1} \psi_j^i))}_s^2 \\
& 
\leq \norm{(1-\chi_i^{k,k-1})\psi_j^i}_a^2 + \norm{(1- \chi_i^{k,k-1})\psi_j^i}_s^2.
\end{split}
\label{eqn:decay-1}
\end{eqnarray}

Using (iv) and (v) of Lemma \ref{lemma:aux-ineq}, we have 
\begin{eqnarray}
\norm{\psi^i_j-\psi^i_{j,\text{ms}} }_a^2 + \norm{\pi(\psi^i_j-\psi^i_{j,\text{ms}})}_s^2 \leq 3 (1+ \Lambda^{-1})\left ( \norm{\psi_j^i}_{a(\Omeps \setminus K_{i,k-1})}^2 + \norm{\pi(\psi_j^i)}_{s(\Omeps \setminus K_{i,k-1})}^2 \right ).
\label{eqn:decay-3}
\end{eqnarray}
Next, we estimate the term $\norm{\psi_j^i}_{a(\Omeps \setminus K_{i,k-1})}^2 + \norm{\pi(\psi_j^i)}_{s(\Omeps \setminus K_{i,k-1})}^2$. We claim that it can be bounded by the term $F^2 := \norm{\psi_j^i}_{a(K_{i,k-1} \setminus K_{i,k-2} )}^2 + \norm{\pi(\psi_j^i)}_{s(K_{i,k-1} \setminus K_{i,k-2} )}^2$. This recursive property is crucial in our convergence estimate.

Note that $\text{spt}( 1- \chi_i^{k-1, k-2}) \subseteq \Omeps \setminus K_{i,k-2}$ and $\text{spt} ( \phi_j^i ) \subseteq K_i$. So $s(\phi^i_j, \pi \mathbb{P}( (1- \chi_i^{k-1,k-2}) \psi_j^i))=0$. Choosing test function $v = \mathbb{P}( (1- \chi_i^{k-1,k-2}) \psi_j^i)$ in the variational formulation \eqref{eqn:min_relax}, we have
\begin{eqnarray}
 \quad a\left (\psi_j^i , \mathbb{P}( (1- \chi_i^{k-1,k-2}) \psi_j^i) \right ) + s \left ( \pi(\psi_j^i), \pi(\mathbb{P}( (1- \chi_i^{k-1,k-2}) \psi_j^i)) \right ) = 0.
\label{eqn:decay-3.1}
\end{eqnarray}
Note that 
\begin{eqnarray*}
\begin{split}
a\left (\psi_j^i , \mathbb{P}( (1- \chi_i^{k-1,k-2}) \psi_j^i) \right )  & = \int_{\Omeps \setminus K_{i,k-2}} \nabla \psi_j^i : \nabla \left ( \mathbb{P}( (1- \chi_i^{k-1,k-2}) \psi_j^i) \right ) ~ dx \\
& = \int_{\Omeps \setminus K_{i,k-2}} \abs{\nabla \psi_j^i}^2 ~ dx - \int_{\Omeps \setminus K_{i,k-2}} \nabla \psi_j^i : \nabla \left (\mathbb{P}(\chi_i^{k-1,k-2} \psi_j^i) \right )~ dx.
\end{split}
\end{eqnarray*}
Consequently, we have 
\begin{eqnarray}
\begin{split}
\norm{\psi_j^i}_{a(\Omeps \setminus K_{i,k-1})}^2 & \leq \int_{\Omeps \setminus K_{i,k-2}} \abs{\nabla \psi_j^i}^2 ~ dx \\
& = a\left (\psi_j^i , \mathbb{P}( (1- \chi_i^{k-1,k-2}) \psi_j^i) \right ) + \int_{\Omeps \setminus K_{i,k-2}} \nabla \psi_j^i : \nabla \left (\mathbb{P}(\chi_i^{k-1,k-2} \psi_j^i) \right )~ dx \\
& \leq a\left (\psi_j^i , \mathbb{P}( (1- \chi_i^{k-1,k-2}) \psi_j^i) \right ) + \norm{\psi_j^i}_{a(K_{i,k-1} \setminus K_{i,k-2})} \norm{\mathbb{P}(\chi_i^{k-1,k-2} \psi_j^i)}_{as(K_{i,k-1} \setminus K_{i,k-2})}.
\end{split}
\label{eqn:decay-4}
\end{eqnarray}

Note that $\chi_i^{k-1,k-2} \equiv 0$ in $\Omeps \setminus K_{i,k-1}$. Thus, we have 
\begin{eqnarray*}
\begin{split}
s \left ( \pi(\psi_j^i), \pi(\mathbb{P}( (1- \chi_i^{k-1,k-2}) \psi_j^i)) \right ) & = 
\norm{\pi(\psi_j^i)}_{s(\Omeps \setminus K_{i,k-1})}^2 \\
& ~ + \int_{K_{i,k-1} \setminus K_{i,k-2}} \tilde \kappa \pi(\psi_j^i) \pi \left (\mathbb{P}( (1-\chi_i^{k-1,k-2}) \psi_j^i) \right )~ dx
\end{split}
\end{eqnarray*}
and 
\begin{eqnarray}
\begin{split}
& \quad \norm{\pi(\psi_j^i)}_{s(\Omeps \setminus K_{i,k-1})}^2  \\
& = s \left ( \pi(\psi_j^i), \pi(\mathbb{P}( (1- \chi_i^{k-1,k-2}) \psi_j^i)) \right ) - \int_{K_{i,k-1} \setminus K_{i,k-2}} \tilde \kappa \pi(\psi_j^i) \pi \left (\mathbb{P}( (1-\chi_i^{k-1,k-2}) \psi_j^i) \right )~ dx \\
& \leq s \left ( \pi(\psi_j^i), \pi(\mathbb{P}( (1- \chi_i^{k-1,k-2}) \psi_j^i)) \right )  + \norm{\pi(\psi_j^i)}_{s(K_{i,k-1} \setminus K_{i,k-2})} \norm{\mathbb{P}( (1-\chi_i^{k-1,k-2}) \psi_j^i) }_{as(K_{i,k-1} \setminus K_{i,k-2})}  .
\end{split}
\label{eqn:decay-5}
\end{eqnarray}

Using (iv) and (v) of Lemma \ref{lemma:aux-ineq}, one can show that 
\begin{eqnarray}
\begin{split}
&\norm{\mathbb{P}( (1-\chi_i^{k-1,k-2}) \psi_j^i) }^2_{as(K_{i,k-1} \setminus K_{i,k-2})}\leq  3(1+\Lambda^{-1})F^2,\\ 
&\norm{\mathbb{P}(\chi_i^{k-1,k-2} \psi_j^i)}^2_{as(K_{i,k-1} \setminus K_{i,k-2})} \leq 3(1+\Lambda^{-1})F^2.
\end{split}\label{ineq:decay-67}
\end{eqnarray}

Combining \eqref{eqn:decay-3.1} and the inequalities \eqref{eqn:decay-4} -- \eqref{ineq:decay-67}, we have 
\begin{eqnarray}
\begin{split}
& \quad \norm{\psi_j^i}_{a(\Omeps \setminus K_{i,k-1})}^2 + \norm{\pi(\psi_j^i)}_{s(\Omeps \setminus K_{i,k-1})}^2 \\
& \leq  \left ( \norm{\psi_j^i}_{a(K_{i,k-1} \setminus K_{i,k-2})}^2 + \norm{\pi(\psi_j^i)}_{s(K_{i,k-1} \setminus K_{i,k-2})}^2 \right )^{1/2} \left [6 (1+\Lambda^{-1})\right]^{1/2}F\\
& = \left [6 (1+\Lambda^{-1})\right]^{1/2}F^2.
\end{split}
\label{eqn:decay-6}
\end{eqnarray}
Notice that, using the inequality \eqref{eqn:decay-6}, we have
\begin{eqnarray*}
\begin{split}
& \quad \norm{\psi_j^i}_{a(\Omeps \setminus K_{i,k-2})}^2 + \norm{\pi(\psi_j^i)}_{s(\Omeps \setminus K_{i,k-2})}^2 \\
& = \norm{\psi_j^i}_{a(\Omeps \setminus K_{i,k-1})}^2 + \norm{\pi(\psi_j^i)}_{s(\Omeps \setminus K_{i,k-1})}^2 + \norm{\psi_j^i}_{a(K_{i,k-1} \setminus K_{i,k-2})}^2 + \norm{\pi(\psi_j^i)}_{s(K_{i,k-1} \setminus K_{i,k-2})}^2 \\
& \geq 
\left ( 1+ \left [6 (1+\Lambda^{-1})\right]^{-1/2} \right ) \left ( \norm{\psi_j^i}_{a(\Omeps \setminus K_{i,k-1})}^2 + \norm{\pi(\psi_j^i)}_{s(\Omeps \setminus K_{i,k-1})}^2 \right ).
\end{split}
\end{eqnarray*}
Using the above inequality recursively, we obtain 
$$ \norm{\psi_j^i}_{a(\Omeps \setminus K_{i,k-1})}^2 + \norm{\pi(\psi_j^i)}_{s(\Omeps \setminus K_{i,k-1})}^2 \leq  \left ( 1+ \left [6 (1+\Lambda^{-1})\right]^{-1/2} \right )^{1-k} \left ( \norm{\psi_j^i}_a^2 + \norm{\pi(\psi_j^i)}_s^2 \right ).$$
This completes the proof. 
\end{proof}

The above lemma shows that the global multiscale basis is localizable. We need the following result to show the convergence estimate.  

\begin{lemma} \label{lem:sum-split}
With the same notations in Lemma \ref{lem:exponential-decay}, we have 
$$ 
\norm{\displaystyle\sum_{i=1}^N (\psi^i_j-\psi^i_{j,\text{ms}}) }^2_a +\norm{\displaystyle\sum_{i=1}^N \pi(\psi^i_j - \psi^i_{j,\text{ms}}) }^2_s\lesssim (k+1)^d  \sum_{i=1}^N \left [ \norm{\psi^i_j-\psi^i_{j,\text{ms}}}_a^2 + \norm{ \pi(\psi^i_j-\psi^i_{j,\text{ms}} )}_s^2 \right ].
$$
 \end{lemma}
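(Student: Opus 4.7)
I set $w_i := \psi^i_j - \psi^i_{j,\text{ms}}$, so the target inequality reads
\[
\Bigl\| \sum_{i=1}^N w_i \Bigr\|_a^2 + \Bigl\| \sum_{i=1}^N \pi(w_i) \Bigr\|_s^2 \lesssim (k+1)^d \sum_{i=1}^N \bigl( \norm{w_i}_a^2 + \norm{\pi(w_i)}_s^2 \bigr).
\]
The source of the factor $(k+1)^d$ will be the bounded-overlap property of the oversampling patches: by the shape regularity of $\mathcal{T}^H$ and the definition of the $k$-layer patch $K_{i,k}$, there is a generic constant $C_{\mathrm{ol}} > 0$ such that for every $x \in \Omega^\epsilon$, the cardinality of $\{i : x \in K_{i,k+1}\}$ is at most $C_{\mathrm{ol}}(k+1)^d$. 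Applied via a pointwise Cauchy-Schwarz, this yields, for any family $\{v_i\}$ with $\spt(v_i) \subseteq K_{i,k+1}$, the bounded-overlap estimate
\[
\Bigl\| \sum_i v_i \Bigr\|_a^2 + \Bigl\| \sum_i \pi(v_i) \Bigr\|_s^2 \lesssim (k+1)^d \sum_i \bigl( \norm{v_i}_a^2 + \norm{\pi(v_i)}_s^2 \bigr).
\]

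The complication is that $w_i$ itself is not compactly supported in any $K_{i,m}$, since the global basis $\psi^i_j$ lives on all of $\Omega^\epsilon$. To reduce to the compactly supported case, I would use the cutoff $\chi_i^{k+1,k}$ and the divergence-free projection $\mathbb{P}$ from Lemma \ref{lemma:projection} to decompose
\[
w_i = \mathbb{P}\bigl(\chi_i^{k+1,k} w_i\bigr) + \mathbb{P}\bigl((1-\chi_i^{k+1,k}) \psi^i_j\bigr),
\]
where the identification of the second summand uses that $\psi^i_{j,\text{ms}}$ vanishes outside $K_{i,k}$. The first piece has support in $K_{i,k+1}$, so the bounded-overlap estimate applies directly to its sum; its $a$- and $s$-norms are in turn controlled by $\norm{w_i}_a^2 + \norm{\pi(w_i)}_s^2$ through the contractivity of $\mathbb{P}$ in $\norm{\cdot}_{as}$ combined with the commutator estimates (iv)--(v) of Lemma \ref{lemma:aux-ineq} for the cutoff.

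The main obstacle is handling the tail $\mathbb{P}((1-\chi_i^{k+1,k})\psi^i_j)$: it is not compactly supported, and a naive triangle inequality on $\sum_i$ of these tails would produce an unwanted $N$-dependent constant. I plan to control it by first noting that the annular part $\spt(1-\chi_i^{k+1,k}) \cap K_{i,k+2}$ still enjoys a bounded-overlap structure (with constant $\lesssim (k+1)^d$), and then bounding the contribution of $\psi^i_j$ on the layers beyond $K_{i,k+2}$ using the exponential-decay estimates that are internal to the proof of Lemma \ref{lem:exponential-decay}; those are in turn absorbed into $\norm{w_i}_a^2 + \norm{\pi(w_i)}_s^2$. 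Summing over $i$ and collecting constants then closes the bound at the advertised order $(k+1)^d$.
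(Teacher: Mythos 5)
Your strategy is genuinely different from the paper's, and the difference matters. The paper never splits the individual differences $w_i:=\psi_j^i-\psi_{j,\text{ms}}^i$ into near and far parts. Instead, writing $w=\sum_i w_i$, it expands $\norm{w}_a^2+\norm{\pi (w)}_s^2=\sum_i\left[a(w_i,w)+s(\pi (w_i),\pi (w))\right]$ and then uses the Galerkin orthogonality obtained by subtracting \eqref{eqn:min_relax_ms} from \eqref{eqn:min_relax} --- namely that $a(w_i,v)+s(\pi (w_i),\pi (v))=0$ for divergence-free $v$ supported away from $K_{i,k}$ --- to replace $w$ by $\mathbb{P}(\chi_i^{k+1,k}w)$ in each pairing. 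The far part of $w$ is annihilated, not estimated; Cauchy--Schwarz together with the overlap count $\sum_i\norm{w}_{a(K_{i,k+1})}^2\lesssim(k+1)^d\norm{w}_a^2$ then finishes the proof. You never invoke this orthogonality, which is the one structural fact that makes the lemma cheap.

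Because of that, the entire burden of your argument falls on the tails $\mathbb{P}\bigl((1-\chi_i^{k+1,k})\psi_j^i\bigr)$, and this is precisely the step you leave as a plan rather than a proof. The plan is not hopeless: decomposing each tail into annular layers, combining the recursive decay inequality inside the proof of Lemma \ref{lem:exponential-decay} with the identity $\psi_j^i=w_i$ on $\Omeps\setminus K_{i,k}$ (and the locality of $\pi$) to absorb layer $m$ at rate $\rho^{(m-k)/2}$ into $\bigl(\norm{w_i}_a^2+\norm{\pi(w_i)}_s^2\bigr)^{1/2}$, and summing $\sum_{m\ge k}(m+1)^{d/2}\rho^{(m-k)/2}\lesssim(k+1)^{d/2}$ does close the bound at the advertised order. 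But none of this is executed, and it is the heart of the matter, so as written the proposal has a genuine gap. There is also a concrete flaw in your decomposition: the projection of Lemma \ref{lemma:projection} is domain-dependent, so no single operator $\mathbb{P}$ can simultaneously give the additivity $\mathbb{P}(\chi_i^{k+1,k}w_i)+\mathbb{P}((1-\chi_i^{k+1,k})w_i)=\mathbb{P}(w_i)=w_i$ (which requires $\mathbb{P}=\mathbb{P}_{\Omeps}$) and map $\chi_i^{k+1,k}w_i$ to a field supported in $K_{i,k+1}$ (which requires $\mathbb{P}=\mathbb{P}_{K_{i,k+1}}$). For the pure norm estimate you are after the projections are unnecessary --- the plain splitting $w_i=\chi_i^{k+1,k}w_i+(1-\chi_i^{k+1,k})w_i$ already has the support properties you need --- so drop them. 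In short: a different and in principle viable route, but its decisive step is unproved and the use of $\mathbb{P}$ needs repair; the paper's orthogonality trick avoids both issues.
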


\begin{proof}
Denote $w:=\displaystyle{\sum_{i=1}^N} (\psi^i_j-\psi^i_{j,\text{ms}}) $. 
Noice that, for any $i\in \{1,\cdots, N\}$, it holds that 
\begin{equation}\label{variable system:sum}
\begin{aligned}
a(\psi^i_j-\psi^i_{j,\text{ms}},v)+s(\pi(\psi^i_j-\psi^i_{j,\text{ms}}),\pi(v))&=0, \quad \tforall v\in \mathbf{V}^{\text{div}}_0(K_{i,k}).
\end{aligned}
\end{equation}

Choosing  $v=\mathbb{P}((1-\chi_i^{k+1,k})w)$ in \eqref{variable system:sum}, 
we have 
$$
a\left (\psi^i_j-\psi^i_{j,\text{ms}},\mathbb{P}((1-\chi_i^{k+1,k})w)\right)+s\left (\pi(\psi^i_j-\psi^i_{j,\text{ms}}),\pi (\mathbb{P}((1-\chi_i^{k+1,k})w))\right ) = 0.
$$
Note that $\mathbb{P}(w)=w$. Hence, we have 
\begin{align*}
\norm{w}_a^2+\norm{\pi w}_s^2&=\sum_{i=1}^N  a(\psi^i_j-\psi^i_{j,\text{ms}},w )+  s(\pi(\psi^i_j-\psi^i_{j,\text{ms}}),\pi(w) )  \\
&=\sum_{i=1}^N  a\left (\psi^i_j-\psi^i_{j,\text{ms}},\mathbb{P}(\chi_i^{k+1,k} w) \right )+  s\left (\pi(\psi^i_j-\psi^i_{j,\text{ms}}),\pi( \mathbb{P} (\chi_i^{k+1,k} w)) \right ).
\end{align*}
For each $i \in \{ 1,\cdots, N\}$, using the properties of the cutoff function $\chi_i^{k+1,k}$ and (ii) of Lemma \ref{lemma:aux-ineq}, we have the following estimates: 
\begin{eqnarray}
\begin{split}
\norm{\chi_i^{k+1,k} w}_a^2&
\lesssim \norm{w}^2_{s(K_{i,k+1})}+\norm{w}^2_{a(K_{i,k+1})}
\leq (1+\Lambda^{-1})  \left (\norm{w}^2_{a(K_{i,k+1})}+\norm{\pi(w)}^2_{s(K_{i,k+1})}\right ),\\
\norm{\pi(\chi_i^{k+1,k} w )}_{s}^2 &\leq  \norm{ \chi_i^{k+1,k} w}_{s(K_{i,k+1})}^2 \leq \Lambda^{-1}\norm{w}_{a(K_{i,k+1})}^2 +  \norm{\pi(w)}_{s(K_{i,k+1})}^2. 
\end{split}
\label{ineq:1}
\end{eqnarray}
Furthermore, an application of \eqref{ineq:1} we arrive at the following estimate:
\begin{eqnarray}
\begin{split}
\norm{\chi_i^{k+1,k} w}_{as}^2&=  \norm{\chi_i^{k+1,k} w}_a^2+\norm{\pi(\chi_i^{k+1,k} w)}_s^2 \lesssim  \norm{w}^2_{a(K_{i,k+1})}+\norm{\pi(w)}^2_{s(K_{i,k+1})}
\end{split}
\label{ineq:2}
\end{eqnarray}

Combining \eqref{ineq:1} and \eqref{ineq:2}, we have
\begin{align*}
\norm{w}_a^2+\norm{\pi(w)}_s^2&\leq \sum_{i=1}^N  \norm{ \psi^i_j-\psi^i_{j,\text{ms}}}_a \cdot \norm{\chi_i^{k+1,k} w  } _{as} +\norm{ \pi(\psi^i_j-\psi^i_{j,\text{ms}} )}_s \cdot  \norm{\chi_i^{k+1,k} w }_{as} \\
&\lesssim \sum_{i=1}^N \left (  \norm{ \psi^i_j-\psi^i_{j,\text{ms}}}_a^2 + \norm{ \pi(\psi^i_j-\psi^i_{j,\text{ms}} )}_s^2 \right)^{1/2} \cdot  \left ( \norm{w}^2_{a(K_{i,k+1})}+\norm{\pi(w)}^2_{s(K_{i,k+1})} \right )^{1/2} \\
& \lesssim \left ( \sum_{i=1}^N \norm{ \psi^i_j-\psi^i_{j,\text{ms}}}_a^2 + \norm{ \pi(\psi^i_j-\psi^i_{j,\text{ms}} )}_s^2 \right )^{1/2} \left ( \sum_{i=1}^N \norm{w}_{a(K_{i,k+1})}^2 + \norm{ \pi(w) }_{s(K_{i,k+1})}^2 \right)^{1/2}\\
& \lesssim (k+1)^{d/2} \left ( \sum_{i=1}^N \norm{ \psi^i_j-\psi^i_{j,\text{ms}}}_a^2 + \norm{ \pi(\psi^i_j-\psi^i_{j,\text{ms}} )}_s^2 \right )^{1/2} \left ( \norm{w}_a^2+\norm{\pi(w)}_s^2 \right )^{1/2}.
\end{align*}
Therefore, we have 
$$ \norm{w}_a^2+\norm{\pi(w)}_s^2 \lesssim (k+1)^d  \sum_{i=1}^N \left [ \norm{\psi^i_j-\psi^i_{j,\text{ms}}}_a^2 + \norm{ \pi(\psi^i_j-\psi^i_{j,\text{ms}} )}_s^2 \right ].$$

This completes the proof. 
\end{proof} 
 

Finally, we state and prove the main result of this work. It reads as follows. 
\begin{theorem}
Let $\mathbf{u}$ be the solution of \eqref{solution:u} and $\mathbf{u}_{\text{ms}}$ be the solution of \eqref{bilinear_velocity}. Then, we have 
$$
\norm{\mathbf{u}-\mathbf{u}_{\text{ms}}}_a \lesssim \Lambda^{-1} \norm{\tilde \kappa^{-1/2} \mathbf{f}} + \max \{ \tilde \kappa \} (k+1)^{d/2} E^{1/2} (1+D)\norm{\mathbf{u}_{\text{glo}}}_s,
$$
where $\mathbf{u}_{\text{glo}}$ is the solution of \eqref{solution:u_glo}. Moreover, if the oversampling parameter $k$ is sufficiently large and $\{ \chi_i^{\text{ms}} \}_{i=1}^{N_c}$ is a set of bilinear partition of unity, we have 
$$ \norm{\mathbf{u}-\mathbf{u}_{\text{ms}}}_a \lesssim H \Lambda^{-1} \norm{\mathbf{f}}.$$
\end{theorem}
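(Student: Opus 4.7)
The plan is to exploit Galerkin orthogonality on the divergence-free subspace and to construct a quasi-optimal element of $V_{\text{ms}}$ by localizing the expansion of $\mathbf{u}_{\text{glo}}$. Because every $\mathbf{v}\in V_{\text{ms}}\subset \mathbf{V}_0^{\text{div}}$ is divergence-free, the pressure term $b(\mathbf{v},p)$ in \eqref{solution:u} drops out and $a(\mathbf{u}-\mathbf{u}_{\text{ms}},\mathbf{v})=0$ for all $\mathbf{v}\in V_{\text{ms}}$, so Cea's lemma reduces the task to bounding $\norm{\mathbf{u}-\mathbf{w}}_a$ for some $\mathbf{w}\in V_{\text{ms}}$. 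I would expand $\mathbf{u}_{\text{glo}}=\sum_{i,j}\alpha_j^i\psi_j^i$ in the global multiscale basis, take $\mathbf{w}:=\tilde{\mathbf{u}}_{\text{ms}}=\sum_{i,j}\alpha_j^i\psi_{j,\text{ms}}^i\in V_{\text{ms}}$, and split the total error by the triangle inequality into the global-basis error $\norm{\mathbf{u}-\mathbf{u}_{\text{glo}}}_a$ (controlled directly by the first theorem of this section) and the localization error $\norm{\mathbf{u}_{\text{glo}}-\tilde{\mathbf{u}}_{\text{ms}}}_a$.

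For the localization term I would mimic the proof of Lemma \ref{lem:sum-split} but carry the coefficients $\alpha_j^i$ through. Testing the equation $a(\psi_j^i-\psi_{j,\text{ms}}^i,v)+s(\pi(\psi_j^i-\psi_{j,\text{ms}}^i),\pi(v))=0$ (valid on $\mathbf{V}_0^{\text{div}}(K_{i,k})$) with $v=\mathbb{P}(\chi_i^{k+1,k}w)$, where $w:=\mathbf{u}_{\text{glo}}-\tilde{\mathbf{u}}_{\text{ms}}$, weighting by $\alpha_j^i$, and summing yields
\[
\norm{w}_a^2+\norm{\pi w}_s^2 \;\lesssim\; (k+1)^d\sum_{i,j}(\alpha_j^i)^2\bigl(\norm{\psi_j^i-\psi_{j,\text{ms}}^i}_a^2+\norm{\pi(\psi_j^i-\psi_{j,\text{ms}}^i)}_s^2\bigr).
\]
Lemma \ref{lem:exponential-decay} then replaces each parenthesized term by $E\bigl(\norm{\psi_j^i}_a^2+\norm{\pi(\psi_j^i)}_s^2\bigr)$, and testing \eqref{eqn:min_relax} with $v=\psi_j^i$ itself gives $\norm{\psi_j^i}_a^2+\norm{\pi(\psi_j^i)}_s^2=s(\phi_j^i,\pi(\psi_j^i))\le 1$ after Cauchy--Schwarz.

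The main obstacle is the coefficient estimate $\sum_{i,j}(\alpha_j^i)^2\lesssim \norm{\mathbf{u}_{\text{glo}}}_s^2$, and this is precisely where Lemma \ref{lemma:properties} becomes indispensable. I would apply it to the auxiliary element $\varphi:=\sum_{i,j}\alpha_j^i\phi_j^i\in V_{\text{aux}}$ to obtain a divergence-free lift $z\in\mathbf{V}_0^{\text{div}}$ with $\pi(z)=\varphi$ and $\norm{z}_a^2\le D\norm{\varphi}_s^2=D\sum_{i,j}(\alpha_j^i)^2$. Testing \eqref{eqn:min_relax} with $z$ kills $b(z,\xi_j^i)$ by divergence-freeness, so multiplying by $\alpha_j^i$ and summing yields the clean identity
\[
\sum_{i,j}(\alpha_j^i)^2 \;=\; a(\mathbf{u}_{\text{glo}},z)+s(\pi(\mathbf{u}_{\text{glo}}),\varphi).
\]
Cauchy--Schwarz combined with $\norm{z}_a^2\le D\norm{\varphi}_s^2$ delivers $\bigl(\sum_{i,j}(\alpha_j^i)^2\bigr)^{1/2}\lesssim (1+\sqrt D)\bigl(\norm{\mathbf{u}_{\text{glo}}}_a+\norm{\mathbf{u}_{\text{glo}}}_s\bigr)$, and the residual $\norm{\mathbf{u}_{\text{glo}}}_a$ factor is swapped for $\max\{\tilde\kappa\}^{1/2}\norm{\mathbf{u}_{\text{glo}}}_s$ via a pointwise norm comparison that compares $|\nabla\mathbf{v}|$ to $\tilde\kappa^{1/2}|\mathbf{v}|$ at the coarse scale; this is where the $\max\{\tilde\kappa\}$ prefactor of the theorem enters.

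Assembling the three estimates produces the first displayed bound. For the sharpened $H\Lambda^{-1}\norm{\mathbf{f}}$ estimate under the bilinear partition-of-unity assumption, I would choose the oversampling parameter $k\sim|\log H|$ so that the exponential decay factor $E^{1/2}$ is of order $H$ and the algebraic weight $(k+1)^{d/2}$ is swallowed into the implicit constant; the improved global estimate $\norm{\mathbf{u}-\mathbf{u}_{\text{glo}}}_a\lesssim H\Lambda^{-1}\norm{\mathbf{f}}$ handles the first summand, while the a priori bound $\norm{\mathbf{u}_{\text{glo}}}_s\lesssim \Lambda^{-1/2}\norm{\mathbf{u}_{\text{glo}}}_a\lesssim \Lambda^{-1/2}\norm{\tilde\kappa^{-1/2}\mathbf{f}}$ ensures the localization contribution is of the same order.
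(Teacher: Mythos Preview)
Your overall skeleton matches the paper's proof almost exactly: Galerkin orthogonality, the splitting $\norm{\mathbf{u}-\mathbf{u}_{\text{glo}}}_a+\norm{\mathbf{u}_{\text{glo}}-\tilde{\mathbf{u}}_{\text{ms}}}_a$, the combination of Lemmas \ref{lem:sum-split} and \ref{lem:exponential-decay} for the localization error, and the identity $\sum_{i,j}(\alpha_j^i)^2=a(\mathbf{u}_{\text{glo}},z)+s(\pi(\mathbf{u}_{\text{glo}}),\varphi)$ obtained from Lemma \ref{lemma:properties}. The gap is the step where you ``swap $\norm{\mathbf{u}_{\text{glo}}}_a$ for $\max\{\tilde\kappa\}^{1/2}\norm{\mathbf{u}_{\text{glo}}}_s$ via a pointwise norm comparison.'' That is an inverse-type inequality, $\int\abs{\nabla\mathbf{u}_{\text{glo}}}^2\lesssim\max\{\tilde\kappa\}\int\tilde\kappa\abs{\mathbf{u}_{\text{glo}}}^2$, and there is no mechanism in the paper that provides it: elements of $V_{\text{glo}}$ are solutions of fine-scale constrained minimization problems and may oscillate at the perforation scale, so no coarse-scale inverse estimate applies. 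The only available bound of $\norm{\cdot}_a$ by $\norm{\cdot}_s$ is Lemma \ref{lemma:aux-ineq}(i), which holds only on $V_{\text{aux}}$. The companion claim in your last paragraph, $\norm{\mathbf{u}_{\text{glo}}}_s\lesssim\Lambda^{-1/2}\norm{\mathbf{u}_{\text{glo}}}_a$, has the same defect: Lemma \ref{lemma:aux-ineq}(ii) requires $\pi(\mathbf{v})=0$, which $\mathbf{u}_{\text{glo}}$ does not satisfy.

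The paper closes this gap by a short linear-algebra argument rather than a norm comparison. Writing $A_{ij,\ell k}:=a(\psi_j^i,\psi_k^\ell)+s(\pi\psi_j^i,\pi\psi_k^\ell)$ and $\mathbf{c}=(\alpha_j^i)$, your identity reads $\norm{\mathbf{c}}_2^2\le(1+D)\,\mathbf{c}^TA\mathbf{c}$, i.e.\ the smallest eigenvalue of the symmetric matrix $A$ is at least $(1+D)^{-1}$, hence $\norm{A^{-1}}_2\le 1+D$. On the other hand a direct calculation gives $(A\mathbf{c})_{\ell k}=s(\pi(\mathbf{u}_{\text{glo}}),\phi_k^\ell)$, so $\norm{A\mathbf{c}}_2=\norm{\pi(\mathbf{u}_{\text{glo}})}_s\le\norm{\mathbf{u}_{\text{glo}}}_s$. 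Together these give $\sum_{i,j}(\alpha_j^i)^2\le(1+D)^2\norm{\mathbf{u}_{\text{glo}}}_s^2$ with no $\norm{\mathbf{u}_{\text{glo}}}_a$ term to dispose of. The factor $\max\{\tilde\kappa\}$ in the theorem then enters only at the very end, in the \emph{opposite} direction (bounding $\norm{\mathbf{u}_{\text{glo}}}_s$ by $\norm{\tilde\kappa^{-1/2}\mathbf{f}}$ through Poincar\'e and the Galerkin equation), not where you placed it.
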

\begin{proof}
It follows from Galerkin orthogonality that $ \norm{\mathbf{u}-\mathbf{u}_{\text{ms}}}_a\leq \norm{\mathbf{u}-\mathbf{v}}_a$ for any $\mathbf{v}\in V_{\text{ms}}$.
We write $\displaystyle{\mbf{u}_{\text{glo}} := \sum_{i=1}^N  \sum_{j=1}^{\ell_i} c_{ij} \psi_j^i}$
and define a function $\mbf{v}$ such that 
$\displaystyle{ \mbf{v} := \sum_{i=1}^N  \sum_{j=1}^{\ell_i} c_{ij} \psi_{j,\text{ms}}^i.}$
Then, we have 
\begin{eqnarray*}
\begin{split}
\norm{\mbf{u} - \mbf{u}_{\text{ms}}}_a & \leq \norm{\mbf{u} - \mbf{v}}_a \leq \norm{\mbf{u} - \mbf{u}_{\text{glo}}}_a + \norm{\mbf{u}_{\text{glo}} - \mbf{v}}_a.
\end{split}
\end{eqnarray*}
The first term of the right-hand side can be estimated by the result of \eqref{thm:ineqn2}. It suffices to estimate the second term. By Lemmas \ref{lem:exponential-decay} and \ref{lem:sum-split}, we have
\begin{eqnarray*}
\begin{split}
\norm{\mbf{u}_{\text{glo}} - \mbf{v}}_a^2 & = \norm{ \sum_{i=1}^N \sum_{j=1}^{\ell_i} c_{ij} (\psi_j^i - \psi_{j,\text{ms}}^i)}_a^2\\
& \leq C(k+1)^d \sum_{i=1}^N\left( \norm{\sum_{j=1}^{\ell_i} c_{ij} (\psi_j^i - \psi_{j,\text{ms}}^i)}_a^2+\norm{\sum_{j=1}^{\ell_i} c_{ij}\pi (\psi_j^i - \psi_{j,\text{ms}}^i)}_s^2  \right) \\
& \leq C(k+1)^d E \sum_{i=1}^N \sum_{j=1}^{\ell_i} (c_{ij})^2 \left(\norm{\psi_j^i}_a^2+\norm{\pi\psi_j^i}_s^2 \right).
\end{split}
\end{eqnarray*}
Choosing test function $v = \psi^i_j$ in \eqref{eqn:min_relax}, 
we obtain that $\norm{\psi^i_j}_a^2+\norm{\pi\psi^i_j}^2_s\leq \norm{\phi^i_j}^2_s $. Therefore,
\begin{eqnarray}
\norm{\mbf{u}_{\text{glo}} - \mbf{v}}_a^2\lesssim E(k+1)^d \sum_{i=1}^N \sum_{j=1}^{\ell_i} (c_{ij})^2 \norm{\phi^i_j}_s^2 = E(k+1)^d \sum_{i=1}^N \sum_{j=1}^{\ell_i} (c_{ij})^2.
\end{eqnarray}
Next, we estimate the term $\displaystyle{\sum_{i=1}^N \sum_{j=1}^{\ell_i} (c_{ij})^2}$. 
Note that 
$\displaystyle{\pi(\mbf{u}_{\text{glo}}) = \sum_{i=1}^N  \sum_{j=1}^{\ell_i} c_{ij} \pi(\psi_j^i)}$. 
Using the variational formulation \eqref{eqn:min_relax} with test function $v = \psi_j^i$, we obtain 
$$b_{\ell k} := s\left (\pi(\mbf{u}_{\text{glo}}) , \phi_k^\ell \right ) = \sum_{i=1}^N  \sum_{j=1}^{\ell_i} c_{ij} s(\pi(\psi_j^i), \phi_k^\ell ) = \sum_{i=1}^N  \sum_{j=1}^{\ell_i} c_{ij} \left ( \underbrace{a(\psi_j^i, \psi_k^\ell) + s(\pi(\psi_j^i), \pi(\psi_k^\ell))}_{=: a_{ij, \ell k}} \right ).$$
If we denote $\mbf{b} = \left (b_{\ell k} \right ) \in \mathbb{R}^{\mathcal{N}}$ and $\mbf{c} = \left ( c_{ij} \right ) \in \mathbb{R}^{\mathcal{N}}$ with $\mathcal{N} :=\displaystyle{\sum_{i=1}^N \ell_i}$, then we have 
$$ \mbf{b} = A \mbf{c} \quad \text{and} \quad \norm{\mbf{c}}_2 \leq \norm{A^{-1}}_2 \norm{\mbf{b}}_2,$$
where $A := \left ( a_{ij,\ell k} \right ) \in \mathbb{R}^{\mathcal{N} \times \mathcal{N}}$ and $\norm{\cdot}_2$ denotes the standard Euclidean norm for vectors in $\mathbb{R}^{\mathcal{N}}$ and its induced matrix norm in $\mathbb{R}^{\mathcal{N} \times \mathcal{N}}$.
By the definition of $\pi: \mbf{V} \to V_{\text{aux}}$, we have 
$$ \pi(\mbf{u}_{\text{glo}}) = \pi\left (\pi (\mbf{u}_{\text{glo}}) \right ) = \sum_{i=1}^N \sum_{j=1}^{\ell_i} s(\pi(\mbf{u}_{\text{glo}}), \phi_j^i) \phi_j^i = \sum_{i=1}^N \sum_{j=1}^{\ell_i} b_{ij} \phi_j^i.$$
Thus, we have $\norm{\mbf{b}}_2 = \norm{\pi(\mbf{u}_{\text{glo}})}_s$. 
We define $\phi:=\displaystyle{\sum_{i=1}^N \sum_{j=1}^{\ell_i} c_{ij}\phi^i_j}$. Note that $\norm{\phi}_s = \norm{\mbf{c}}_2$. Consequently, by Lemma \ref{lemma:properties}, there exists a function $z \in \mbf{V}_0^{\text{div}}$ such that $\pi (z) = \phi$ and $\norm{z}_a^2 \leq D \norm{\phi}_s^2$. 
Since the multiscale basis $\psi^i_j$ satisfies \eqref{eqn:min_relax} and $\mbf{u}_{\text{glo}} $ is a linear combination of $\psi^i_j$'s, we have 
\begin{eqnarray}
a(\mbf{u}_{\text{glo}}, v) + s(\pi(\mbf{u}_{\text{glo}}), \pi v ) = s(\phi, \pi v) \quad \tforall v\in \mbf{V}_0^{\text{div}}(\Omega^\epsilon).
\label{u_glo:variabtional}
\end{eqnarray}
Picking $v=z$ in \eqref{u_glo:variabtional}, we arrive at
\begin{eqnarray*}
\begin{split}
\norm{\phi}^2_s&=a(\mbf{u}_{\text{glo}}, z) + s(\pi(\mbf{u}_{\text{glo}}), \pi z ) \leq \norm{\mbf{u}_{\text{glo}}}_a\cdot D^{1/2}\norm{\phi}_s+\norm{\pi\mbf{u}_{\text{glo}}}_s\cdot\norm{\phi}_s\\
&\leq (1+D)^{1/2}\norm{\phi}_s \left (\norm{ \mbf{u}_{\text{glo}}}^2_a+\norm{\pi\mbf{u}_{\text{glo}}}^2_s \right )^{1/2}.
\end{split}
\end{eqnarray*}
Therefore, we have 
$$ \norm{\mbf{c}}_2^2 = \norm{\phi}_s^2 \leq (1+D) \left (\norm{ \mbf{u}_{\text{glo}}}^2_a+\norm{\pi\mbf{u}_{\text{glo}}}^2_s \right ) = (1+D) \mbf{c}^T A \mbf{c}.$$

From the above, we see that the largest eigenvalue of $A^{-1}$ is bounded by $(1+D)$ and we have the following estimate 
$$ \norm{\mbf{c}}_2 \leq (1+D) \norm{\mbf{b}}_2 = (1+D) \norm{\mbf{u}_{\text{glo}}}_s.$$

As a result, we have 
$$ \norm{\mbf{u}_{\text{glo}} - \mbf{v}}_a^2 \leq (k+1)^d E (1+D)^2 \norm{\mbf{u}_{\text{glo}}}_s^2.$$
It remains to estimate the term $\norm{\mbf{u}_{\text{glo}}}_s$. In particular, we have 
$$ \norm{\mbf{u}_{\text{glo}}}_s^2 \lesssim \max \{ \tilde \kappa \} \norm{\mbf{u}_{\text{glo}}}_a^2 = \max \{ \tilde \kappa \} \left \langle \mbf{f} , \mbf{u}_{\text{glo}} \right \rangle \leq  \max \{ \tilde \kappa \} \norm{\tilde \kappa^{-1/2} \mbf{f} } \norm{\mbf{u}_{\text{glo}}}_s.$$
Therefore, we have 
$$
\norm{\mathbf{u}-\mathbf{u}_{\text{ms}}}_a \lesssim \Lambda^{-1} \norm{\tilde \kappa^{-1/2} \mathbf{f}} + \max \{ \tilde \kappa \} (k+1)^{d/2} E^{1/2} (1+D)  \norm{\tilde \kappa^{-1/2} \mbf{f}}.
$$
If we take $k = O(\log(H^{-1}))$ and assume that $\{ \chi_i^{\text{ms}} \}_{i=1}^{N_c}$ is a set of bilinear partition of unity, then we have $$ \norm{\mbf{u} - \mbf{u}_{\text{ms}}}_a \lesssim H \Lambda^{-1} \norm{\mbf{f}}.$$
This completes the proof. 
\end{proof}

\section{Conclusion} \label{sec:conclusion}
In this work, we have proposed and analyzed the constraint energy minimizing generalized multiscale finite element method for solving the incompressible Stokes flows in perforated domain. 
The proposed method started with a local spectral decomposition of the continuous Sobolev space. 
Based on the concepts of constraint energy minimization and oversampling, we construct divergence-free multiscale basis functions for displacement variable satisfying the property of least energy. The pressure variable is thus recovered on the coarse-grid based on the multiscale approximation of displacement. 
The method is shown to have spectral convergence with error bound proportional to the coarse mesh size. 

\section*{Acknowledgement}

The research of Eric Chung is partially supported by the Hong Kong RGC General Research Fund (Project numbers 14304217 and 14302018) and CUHK Faculty of Science Direct Grant 2019-20.

\bibliographystyle{abbrv}
\bibliography{references}
\end{document}